\newtheorem{theorem}{Theorem}[section]
\newtheorem{lemma}[theorem]{Lemma}
\newtheorem{definition}[theorem]{Definition}
\newtheorem{proposition}[theorem]{Proposition}
\newtheorem{corol}[theorem]{Corollary}
\newtheorem{examples}[theorem]{Examples}
\newtheorem{remark}[theorem]{Remark}
\newtheorem{notation}[theorem]{Notation}
\newtheorem{convention}[theorem]{Convention}
\newcommand{\C}{{\mathcal C}}
\newcommand{\F}{{\mathcal F}}
\newcommand{\M}{{\mathcal M}}
\newcommand{\W}{{\mathcal W}}
\newcommand{\X}{{\mathcal X}}
\newcommand{\decode}{\rhd}
\newcommand{\code}{\lhd}
\newcommand{\dc}{{\lhd\hspace{-0.148em}\rhd}}
\begin{document}

\author{Peter M. Hines}
%\address{Peter Hines \\ Dept. of Computer Science, \\ University of York, \\ York, U.K. \\ YO10 5DD } 
 
 \title{Coherence and strictification for self-similarity}

\maketitle
\begin{abstract}
{\em 
This paper studies questions of coherence  and strictification related to self-similarity --- the identity $S\cong S\otimes S$ in a (semi-)monoidal category. Based on Saavedra's theory of units, we first demonstrate that strict self-similarity cannot simultaneously occur with strict associativity -- i.e. no monoid may have a strictly associative (semi-)monoidal tensor, although many monoids have a semi-monoidal tensor associative up to isomorphism. We then give a simple coherence result for the arrows exhibiting self-similarity and use this to describe a `strictification procedure' that gives a semi-monoidal equivalence of categories relating strict and non-strict self-similarity, and hence monoid analogues of many categorical properties. Using this, we characterise a large class of diagrams (built from the canonical isomorphisms for the relevant tensors, together with the isomorphisms exhibiting the self-similarity) that  are guaranteed to commute.
 }

\end{abstract}

\section{Introduction}\label{intro}
An object $S$ in a semi-monoidal category $(\mathcal C,\otimes)$ is {\em self-similar} when it satisfies the identity $S\cong S\otimes S$. In a 1-categorical sense, self-similar objects are simply pseudo-idempotents and thus share many categorical properties with unit objects as characterised by Saavedra (see Section \ref{sav_units}); they also provide particularly well-behaved examples of split idempotents (see Section \ref{splits}). The most familiar non-unit example of a self-similar object is undoubtedly the natural numbers in the monoidal categories $({\bf Set},\times)$ and $({\bf Set},\uplus)$, as illustrated by Hilbert's parable of the 'Grand Hotel' (see \cite{NY} for a good exposition in a general context).  Topologically, self-similarity is clearly seen in the Cantor set \& other fractals \cite{PH98,blatant}; algebraically, it is very closely connected with Thompson's groups (see Section \ref{Thompson}), the polycyclic monoids \cite{PH98,MVL,PH99}, and finds applications to algebraic models of tilings \cite{KeLa}.

In computer science, self-similarity plays a key role in categorical models of untyped systems such as the C-monoids of \cite{LS}  (single-object Cartesian closed categories without unit objects modelling untyped lambda calculus -- see \cite{WHS} for a survey). It is particularly heavily used in Girard's Geometry of Interaction program  \cite{GOI,GOI2} where, as well as being a key feature of the `dynamical algebra' it is used together with compact closure to construct monoids isomorphic to their own endomorphism monoid \cite{AHS,PH98,PH99}.   More recently, it has found applications in linguistic and grammatical models \cite{PH13c}, categorical models of quantum mechanics \cite{PH14a}, and -- via the close connections with Thompson's groups - is relevant to  cryptography and cryptanalysis \cite{SU,PH14b}, and homotopy idempotents (see Section \ref{Thompson}). 

The aim of this paper is to give coherence results and a strictification procedure for self-similarity, and so to relate the isomorphisms exhibiting self-similarity with canonical isomorphisms for the relevant (semi-)monoidal structures.  The motivation for this  is the observation (Theorem \ref{no_sim_strict}), based on Saavedra's theory of units (See Section \ref{sav_units}), that strict self-similarity and strict associativity are mutually exclusive --- either one or the other of these properties must be up to non-trivial isomorphism.

\section{Categorical preliminaries}

We refer to \cite{MCL} for the theory of monoidal categories. We work with a slight generalisation that satisfies all the axioms for a monoidal category except for the existence of a unit object; following \cite{K}, we refer to these as {\em semi-monoidal}.
\begin{definition}
A {\bf semi-monoidal category}  is a category $\C$ with a functor $\_\otimes \_:\C\times \C\rightarrow \C$ that is associative up to an object-indexed family of natural isomorphisms $\tau_{X,Y,Z}:X\otimes (Y\otimes Z)\rightarrow (X\otimes Y)\otimes Z$ satisfying MacLane's pentagon condition
\[  ( \  \tau_{W,X,Y}\otimes 1_Z)  \  \tau_{W,X\otimes Y,Z}  (1_W \otimes  \  \tau_{X,Y,Z}) \ = \   \  \tau_{W\otimes X,Y,Z}  \  \tau_{W,X,Y\otimes Z} \] 
A functor between semi-monoidal categories that (strictly) preserves the tensor is a {\bf (strict) semi-monoidal functor}.  We assume the obvious definition of {\bf semi-monoidal equivalence of categories}.%Note that a semi-monoidal functor between monoidal categories may not preserve the unit object.
\end{definition}
(The above definition  differs from that of \cite{K} in that we do not assume strict associativity -- see Theorem \ref{no_sim_strict} for the motivation for this). When we do consider unit objects, we will use Saavedra's characterisation, rather than MacLane \& Kelly's original definition -- see Section \ref{sav_units} below.
\begin{remark}   
Any category $\mathcal C$ may be given a (degenerate) semi-monoidal tensor by fixing some object $X\in Ob(\C)$, and and defining $A\otimes B=X$, $f\otimes g=1_X$ for all objects $A,B\in Ob(\C)$ and arrows $f,g\in Arr(\C)$. In the theory of monoidal categories, standing assumptions such as (monoidal) well-pointedness are used to exclude such pathologies (see, for example, \cite{AH}); in the absence of a unit object we will instead use the assumption of {\em `faithful objects'} given below.
\end{remark}

\begin{definition}\label{fo} A monoidal category $(\C,\otimes,I)$ is called {\bf monoidally well-pointed} when  
$f=g\in \C(A\otimes X,B\otimes Y) \mbox{ iff } \forall \ a\in \C(I,A) , x\in \C(I,B), \ \  f(a\otimes x) = g (a\otimes x)$. 
A consequence of monoidal well-pointedness is that for arbitrary $A\in Ob(\C)$, where $A$ is not a strict retract of the unit object, the functors $(A\otimes \_ ),(\_ \otimes A):\C\rightarrow \C$ are faithful.  Motivated by this, we say that an object $A$ of a semi-monoidal category $(C,\otimes )$ is {\bf faithful} when the functors $A\otimes \_, \_ \otimes A :\C\rightarrow \C$  are faithful. % When all objects of a semi-monoidal category $(\C ,\otimes )$ are faithful, we say that $(\C,\otimes)$ {\bf has faithful objects}.
\end{definition}

\begin{convention}\label{assume_fo}{\bf Objects of semi-monoidal categories are faithful} 
Semi-monoidal categories in the literature commonly arise as semi-monoidal subcategories of monoidally well-pointed categories % (possibly up to a fully faithful functor that we describe in Section \ref{untyping}),
  and have faithful objects. To avoid repeatedly emphasising this, we will instead indicate when an object of a semi-monoidal category is {\em not} faithful.
\end{convention}

\subsection{Saavedra's theory of units}\label{sav_units}
MacLane's original presentation of the theory of coherence for monoidal categories gave a single coherence condition for associativity (the Pentagon condition) and four coherence conditions for the units isomorphisms. Three of these four axioms were shown to be redundant in \cite{MK}, leaving a single coherence condition expressing the relationship between the units isomorphisms, and associativity. In \cite{NS} an alternative characterisation of units objects was given that required no additional coherence conditions (see \cite{K} for a comprehensive study of this, and \cite{JK} for extensions of this theory).  We follow this approach; %, both because of its simplicity, and because of the illumination it sheds on the theory of self-similarity. 
Definition \ref{SU} and Theorem \ref{SunitResults} below are  taken from \cite{K,JK}.
\begin{definition}\label{SU}
A {\bf (Saavedra) unit} $U$ in a semi-monoidal category $(\C,\otimes , \tau)$ is a cancellable pseudo-idempotent i.e. there exists an isomorphism $\alpha:  U \otimes U \rightarrow  U$, and the functors $(U\otimes \_ ), (\_ \otimes U) : \C \rightarrow \C$ are fully faithful.
\end{definition}  

We also refer to \cite{K} for the following key results:
\begin{theorem}\label{SunitResults}$ $ \\
\begin{enumerate}
\item Saavedra units are units in the sense of MacLane / Kelly, and thus a semi-monoidal category with a (Saavedra) unit is a monoidal category.
\item Saavedra units are idempotents; i.e. $\alpha \otimes I_U \ = \ 1_U \otimes \alpha$ 
\item The functors $U\otimes \_$ and $\_ \otimes U$ are equivalences of categories.
\end{enumerate}
\end{theorem}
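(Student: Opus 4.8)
The plan is to prove the three parts in the order (2), (1), (3), because the idempotency of $\alpha$ in part (2) is what powers the construction of the unit constraints and the equivalences. The only available data are the isomorphism $\alpha\colon U\otimes U\to U$, the naturality and pentagon identities for $\tau$, and, decisively, the \emph{full} faithfulness of $U\otimes\_$ and $\_\otimes U$. I would use this last property in the sharp form it provides: every morphism $U\otimes A\to U\otimes B$ is $1_U\otimes h$ for a unique $h\colon A\to B$, and a fully faithful functor reflects isomorphisms.

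For part (2), the assertion $\alpha\otimes 1_U = 1_U\otimes\alpha$ is read, with the associator restored, as the equality $(\alpha\otimes 1_U)\,\tau_{U,U,U} = 1_U\otimes\alpha$ between isomorphisms $U\otimes(U\otimes U)\to U\otimes U$. Both sides have source and target of the form $U\otimes(-)$, so by fullness the left-hand side equals $1_U\otimes h$ for a unique $h\colon U\otimes U\to U$, and it suffices to identify $h$ with $\alpha$. I would obtain this by feeding $\alpha$ into a naturality square for $\tau$, cancelling an outer tensor factor of $U$ via faithfulness, and using invertibility of $\alpha$ to pin $h$ down. \emph{This is the step I expect to be the main obstacle}: it is exactly here that mere faithfulness of the functors is insufficient and one must use fullness to manufacture and then cancel the extra $U$; getting the associator bookkeeping right is the only real work.

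Granting (2), part (1) is a construction. Set $m_X = (\alpha\otimes 1_X)\,\tau_{U,U,X}\colon U\otimes(U\otimes X)\to U\otimes X$, an isomorphism natural in $X$ whose source and target are again of the form $U\otimes(-)$. By full faithfulness of $U\otimes\_$ it equals $1_U\otimes\lambda_X$ for a unique $\lambda_X\colon U\otimes X\to X$; naturality of $\lambda$ follows from that of $m$ together with faithfulness, and each $\lambda_X$ is invertible because $U\otimes\_$ reflects isomorphisms. Symmetrically, $\_\otimes U$ yields right constraints $\rho_X\colon X\otimes U\to X$. It then remains to verify the single coherence axiom of \cite{MK} relating $\lambda$, $\rho$ and $\tau$ (together with $\lambda_U=\rho_U=\alpha$), which reduces, after cancelling a tensor factor of $U$, to the idempotency established in (2); by \cite{MK} this one axiom upgrades $(\C,\otimes,\tau)$ to a genuine monoidal category with unit $U$.

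Finally, part (3) is immediate from what precedes. The natural isomorphisms $\lambda_X\colon U\otimes X\to X$ exhibit every object $X$ as isomorphic to $U\otimes X$, hence to an object in the image of $U\otimes\_$; so $U\otimes\_$ is essentially surjective, and being fully faithful by hypothesis it is an equivalence of categories. The same argument with $\rho$ handles $\_\otimes U$.
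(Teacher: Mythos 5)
You should know at the outset that the paper offers no proof of this theorem: Definition \ref{SU} and Theorem \ref{SunitResults} are explicitly imported from \cite{K,JK}, so the only meaningful comparison is with Kock's development in \cite{K} --- and your plan is essentially a reconstruction of it: cancellability used in the sharp form (every arrow $U\otimes A\to U\otimes B$ is $1_U\otimes h$ for a unique $h$, and fully faithful functors reflect isomorphisms), the unit constraints $\lambda,\rho$ manufactured from $\alpha$ by fullness with naturality checked by faithfulness, Kelly's reduction \cite{MK} invoked for part (1), and part (3) exactly as in \cite{K}. The one point worth settling is the step you honestly flag as the main obstacle --- identifying $h$ with $\alpha$ in part (2) --- and here your advertised toolkit does suffice, by an argument shorter than your sketch suggests. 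Work in the strictified setting (legitimate, since MacLane's coherence for associativity alone requires no unit; the $\tau$-bookkeeping is reinstated by naturality of $\tau$, which in strict form is just the interchange law --- this is the precise content of your ``naturality square for $\tau$''). Let $h$ be the unique arrow with $1_U\otimes h=\alpha\otimes 1_U$, and decompose $\alpha\otimes h$ by interchange in both orders, substituting the defining equation each time:
\[ (\alpha\otimes 1_U)\bigl(1_U\otimes\alpha\otimes 1_U\bigr)=(\alpha\otimes 1_U)\bigl(1_{U\otimes U}\otimes h\bigr)=\alpha\otimes h=(1_U\otimes h)\bigl(\alpha\otimes 1_{U\otimes U}\bigr)=(\alpha\otimes 1_U)\bigl(\alpha\otimes 1_U\otimes 1_U\bigr). \]
Cancelling the isomorphism $\alpha\otimes 1_U$ on the left gives $(1_U\otimes\alpha)\otimes 1_U=(\alpha\otimes 1_U)\otimes 1_U$, and faithfulness of $\_\otimes U$ strips the outer factor, yielding $1_U\otimes\alpha=\alpha\otimes 1_U$, i.e.\ $h=\alpha$. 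Note that this uses fullness of $U\otimes\_$ once (to produce $h$) and faithfulness of $\_\otimes U$ once, so both halves of the Saavedra hypothesis are genuinely engaged --- as they must be, since for a bare self-similar structure $(S,\code)$ the corresponding identity $\code\otimes 1_S=1_S\otimes\code$ generally fails. Two mild cautions on the rest: your order $(2)\to(1)\to(3)$ inverts Kock, who constructs $\lambda$ first and extracts idempotency afterwards by a similar double-decomposition trick --- both orders work; and in part (1) the phrase ``after cancelling a tensor factor of $U$'' compresses real work, since the $U$ to be cancelled in Kelly's triangle sits in the middle of $X\otimes U\otimes Y$, so one must first tensor with $U$ on the outside and also invoke the pentagon and naturality of $\tau$ before faithfulness applies; this is standard but not a one-liner.
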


\section{Self-similar objects and structures}
The theory of self-similarity is precisely the theory of pseudo-idempotents in (semi-)monoidal categories. The difference in terminology arises for historical reasons -- in particular, differing conventions in computer science, mathematics, and pure category theory.

\begin{definition}
An object $S$ in a semi-monoidal category $(\C,\otimes)$ is called {\bf self-similar} when $S\cong S\otimes S$. Making the isomorphism exhibiting this self-similarity explicit, a {\bf self-similar structure} is a tuple $(S,\code)$ consisting of an object $S\in Ob(\C)$, and an isomorphism $\code:S\otimes S \rightarrow S$ called the {\bf code} isomorphism. We denote its inverse by $\code^{-1}=\decode : S\rightarrow S\otimes S$ and refer to this as the {\bf decode} isomorphism.

A {\bf strictly self-similar object} is an object $S$ such that $(S,1_S)$ is a self-similar structure. %When $\C$ has a notion of equality on objects, this is equivalent to $S=S\otimes S$. 
The endomorphism monoid of a strictly self-similar object is thus itself a semi-monoidal category with a single object -- i.e. it is a {\bf semi-monoidal monoid}.
\end{definition}

\begin{examples}Examples of non-strict self-similarity are discussed in Section \ref{intro}. %As is immediate from Definition \ref{Sunit}, the unit object of any monoidal category is self-similar, and is strictly self-similar when the unit is strict. 
Strict examples include Thompson's group F (see \cite{KB} for the semi-monoidal tensor and associativity isomorphism of this group, and Section \ref{Thompson} for the explicit connection with strict self-similarity), and the group of bijections on the natural numbers, with the semi-monoidal tensor given by 
\[ (f\star  g) (n) \ = \ \left\{\begin{array}{lr} 
 \vspace{0.4em}
2.f\left(\frac{n}{2}\right) & n \mbox{ even,} \\
2.g\left(\frac{n-1}{2}\right) + 1  & n \mbox{ odd,} 
\end{array} \right. \]
and canonical associativity and symmetry isomorphisms respectively given by 
\[   \tau (n) =\left\{ \begin{array}{lr}
 \vspace{0.4em}
2n &  \ \ \ n\ (mod \ 2)=0, \\
 \vspace{0.4em}
n+1 &  \ \ \ n\ (mod \ 4)=1,  \\	
\frac{n-3}{2} &  \ \ \  n\ (mod \ 4)=3.  \\	
\end{array}\right. \ \ \ , \ \ \ 
\sigma (n)  \ = \ \left\{\begin{array}{lr} 
 \vspace{0.4em}
n+1 & n \mbox{ even,} \\
n-1  & n \mbox{ odd.} 
\end{array} \right. \] 
An elementary arithmetic proof that the above data specifies a semi-monoidal monoid %(i.e. the tensor is a monoid homomorphism, the canonical isomorphisms are natural, and MacLane's pentagon and hexagon conditions are satisfied) 
is given in \cite{PH13b}. More generally, it arises from a special case of a large class of representations of Girard's dynamical algebra (viewed as the closure of the two-generator polycyclic monoid \cite{NP} under the natural partial order of an inverse semigroup) as partial functions, given in \cite{PH98,MVL}.  Readers familiar with the Geometry of Interaction program will recognise the $(\_ \star \_)$ operation as Girard's model (up to Barr's $l_2:{\bf pInj}\rightarrow {\bf Hilb}$ functor -- see \cite{MB,CH}) of the (identified) multiplicative conjunction \& disjunction of \cite{GOI,GOI2}.
\end{examples}

\subsection{Self-similarity as idempotent splitting}\label{splits}
Self-similar structures are a special case of {\em idempotent splittings}. We refer to \cite{FS,LS,PS,RS} for the general theory and reprise some basic properties below:
\begin{definition}
An idempotent $e^2=e\in \C(A,A)$ {\bf splits} when there exists some $B\in Ob(\C)$ together with arrows $f\in \C(A,B)$, $g\in \C(B,A)$ such that $e=gf$ and $fg=1_B$. We refer to the pair $(f,g)$ as a {\bf splitting} of the idempotent $e^2=e$.
\end{definition}

\begin{remark}\label{SSasSplit}
We may characterise self-similar structures in terms of splittings of identities: a self-similar structure $(S,\code)$ uniquely determines, and is uniquely determined by, an isomorphism $\code$ such that $(\code,\decode)$ is a splitting of $1_S$ and $(\decode,\code)$ is a splitting of $1_{S\otimes S}$.
\end{remark}
This characterisation allows us to use standard results on idempotent splittings, such as their uniqueness up to unique isomorphism:
\begin{lemma} Given an idempotent $e^2=e\in \C(A,A)$ together with splittings $(f\in \C(A,B), g\in \C(B,A))$ and $(f'\in \C(A,B'),g'\in \C(B',A))$, then there exists a unique isomorphism $\phi:B\rightarrow B'$ such that the following diagram commutes:
\[ \xymatrix{ 
A \ar[r]^f \ar[d]_{f'}				& B \ar[d]^g \ar[dl]|\phi				\\
B'	\ar[r]_{g'}					&	A			\\
}
\]
\end{lemma}
\begin{proof}
This is a standard result of the theory of idempotent splittings. See, for example, \cite{PS}.
\end{proof}

\begin{corol} \label{unique}
Self-similar structures at a given self-similar object are unique up to unique isomorphism
\end{corol}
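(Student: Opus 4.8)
The plan is to deduce this directly from the characterisation of self-similar structures as splittings of identities (Remark \ref{SSasSplit}) together with the uniqueness of splittings established in the preceding lemma. First I would take two self-similar structures $(S,\code)$ and $(S,\code')$ at the same object $S$. By Remark \ref{SSasSplit}, the decode/code pairs $(\decode,\code)$ and $(\decode',\code')$ are both splittings of the identity idempotent $1_S\in\C(S,S)$, and crucially both have the \emph{same} split object $S\otimes S$; this is forced by the very definition of a self-similar structure, since a code isomorphism is by fiat an arrow $S\otimes S\to S$ and a decode isomorphism an arrow $S\to S\otimes S$.

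Next I would apply the preceding lemma to the idempotent $e=1_S$ equipped with these two splittings. This yields a unique isomorphism $\phi:S\otimes S\to S\otimes S$ making the relevant square commute, i.e. satisfying the two triangle identities $\phi\,\decode=\decode'$ and $\code'\,\phi=\code$. Unwinding these, the mediating arrow is necessarily $\phi=\decode'\,\code$ (equivalently $(\code')^{-1}\code$), an automorphism of $S\otimes S$; one checks immediately, using $\code\,\decode=1_S$ and $\code'\,\decode'=1_S$, that it intertwines the two structures, and the lemma guarantees it is the only arrow doing so. This is precisely the assertion that any two self-similar structures at $S$ are isomorphic, via a unique isomorphism.

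The only point requiring care --- and the main, albeit minor, obstacle --- is to notice that, unlike the general splitting situation where the two split objects $B$ and $B'$ may genuinely differ, here both splittings land in the single fixed object $S\otimes S$, so that the comparison isomorphism is an automorphism of $S\otimes S$ rather than an isomorphism between a priori distinct objects. Beyond this bookkeeping, the statement is a direct specialisation of the lemma and requires no further computation: all of the content has already been packaged into Remark \ref{SSasSplit} and the uniqueness of idempotent splittings.
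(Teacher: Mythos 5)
Your proof is correct, but it takes a genuinely different route from the paper's own argument. You deduce the corollary as a literal specialisation of the preceding lemma on splittings: since a code arrow is by definition an arrow $S\otimes S\to S$, both splittings of $1_S$ share the split object $S\otimes S$, and the lemma's mediating isomorphism is the automorphism $\phi=\decode'\code$ of $S\otimes S$ satisfying $\phi\decode=\decode'$ and $\code'\phi=\code$ --- all of which you verify correctly. The paper instead gives a self-contained direct computation, explicitly flagged as ``somewhat simpler than the general case'' precisely because all arrows involved are invertible: it locates the comparison at $S$ rather than at $S\otimes S$, observing that composing with automorphisms $U:S\to S$ sends self-similar structures to self-similar structures, and that $U=\code'\decode$ is the unique automorphism with $(S,U\code)=(S,\code')$ --- in effect, that $\mathrm{Aut}(S)$ acts simply transitively on self-similar structures at $S$. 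The two mediating isomorphisms carry the same information (one checks $\phi=\decode\, U^{-1}\code$), so nothing is lost either way. What each buys: your version needs no computation and exhibits the corollary as pure general theory of idempotent splittings --- indeed your $\decode'\code$ is exactly the form the paper later invokes in Remark \ref{gencode_as_splittings}, where $\X Sub(v\leftarrow u)=\decode_v\code_u$ is identified as this comparison isomorphism; the paper's version yields the sharper torsor-style formulation over $\mathrm{Aut}(S)$, which is what feeds directly into the remark immediately following the corollary (actual uniqueness forces the only automorphism of $S$ to be $1_S$, whence $S$ is a unit by Theorem \ref{no_sim_strict}) and into Proposition \ref{Drelated}. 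One small point in your favour: Remark \ref{SSasSplit} as printed swaps the two pairs relative to the paper's definition of splitting --- with the convention $e=gf$, $fg=1_B$ for a pair $(f,g)$ with $f\in\C(A,B)$, it is $(\decode,\code)$ that splits $1_S$ and $(\code,\decode)$ that splits $1_{S\otimes S}$ --- and your proof silently uses the correct orientation rather than the remark's literal wording.
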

\begin{proof} The proof of this is somewhat simpler than the general case, as the splittings have both left and right inverses. Given a self-similar structure $(S,\code)$, and an isomorphism $U:S\rightarrow S$, then  
$(S,U\code)$ is also a self-similar structure. Conversely, let $(S,\code')$ be a self-similar structure, and define $U=\code'\decode$. Then  $(S,U\code) = (S,\code')$ and $U=\code'\decode$ is the unique isomorphism satisfying this condition.
\end{proof}

\begin{remark}{\em Uniqueness of self-similar structures}\\
Corollary \ref{unique} provides an illustration of the distinction between `unique up to unique isomorphism', and `actually unique'.  If a self-similar structure at some object $S\in Ob(\C)$ is actually unique, then the only isomorphism from $S$ to itself is the identity. Theorem \ref{no_sim_strict} below then shows that $S$ must be the unit object for the tensor given in Theorem \ref{untyped}.
\end{remark}
The above characterisation of self-similar structures as idempotent splittings also gives an abstract categorical characterisation in terms of limits and colimits of diagrams:
\begin{proposition}
Given a self-similar structure $(S,\code)$ of a semi-monoidal category $(\C,\otimes)$, then 
\begin{enumerate}
\item $\xymatrix{ S \ar[r]^\decode & S \otimes S}$ is a colimit of 
$\xymatrix{ S \ar@(ur,ul)[]_{1_S}}$ 
and a limit of $\xymatrix{ S\otimes S \ar@(ul,ur)[]^{\ \ 1_{S\otimes S}}}$,
\item $\xymatrix{ S\otimes S \ar[r]^\code & S }$ is a limit of 
$\xymatrix{ S \ar@(dr,dl)[]^{1_S}}$ 
and a colimit of $\xymatrix{ S\otimes S \ar@(dl,dr)[]_{1_{S\otimes S}}}$,
\end{enumerate}
and these two conditions characterise self-similar structures at $S\in Ob(\C)$.
\end{proposition}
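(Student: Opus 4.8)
The plan is to reduce the statement to the standard characterisation of idempotent splittings as absolute (co)limits, and then to specialise to the identity idempotents that arise from self-similarity. By Remark \ref{SSasSplit}, a self-similar structure $(S,\code)$ is exactly a splitting of $1_S$ witnessed by $(\decode,\code)$ together with the dual splitting of $1_{S\otimes S}$ witnessed by $(\code,\decode)$. The general fact I would invoke is that for any split idempotent $e=gf\in\C(A,A)$ with $fg=1_B$, the arrow $f\colon A\to B$ is a colimit, and $g\colon B\to A$ a limit, of the one-object loop diagram with endomorphism $e$. Specialising this to $e=1_S$ (splitting object $S\otimes S$) and to $e=1_{S\otimes S}$ (splitting object $S$) reads off exactly the four (co)limit assertions of items 1 and 2.

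For the forward direction I would bypass the general lemma and verify the four universal properties directly, since the computations are short and self-contained. For instance, to see that $\decode\colon S\to S\otimes S$ is a colimit of the loop diagram on $S$, note that a cocone is simply an arrow $h\colon S\to X$ (the loop condition $h\circ 1_S=h$ being vacuous); the mediating arrow is $h\circ\code\colon S\otimes S\to X$, which satisfies $(h\circ\code)\circ\decode=h$ since $\code\circ\decode=1_S$, and it is unique because $\decode$ is split epic. The remaining three cases are symmetric: in each the mediating arrow is produced by composing with $\code$ or $\decode$, existence uses one of the identities $\code\decode=1_S$ or $\decode\code=1_{S\otimes S}$, and uniqueness uses the other. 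Thus each case consumes nothing beyond the two defining equations of the self-similar structure.

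The converse is where care is needed, and I expect it to be the main obstacle. Because the idempotents being split are identities, each \emph{individual} (co)limit condition is degenerate: a cocone (resp. cone) over the loop diagram on $S$ is an arbitrary arrow out of (resp. into) $S$, so `$\decode$ is a colimit' amounts to no more than `$\decode$ is an isomorphism', and likewise for $\code$. Read in isolation the four conditions therefore only force $\decode$ and $\code$ to be isomorphisms, and do not by themselves pin down $\code=\decode^{-1}$. The genuine content of the converse is that the colimit cocone and the limit cone must present the \emph{same} splitting object, i.e. be mutually inverse; this is the one delicate point.

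To discharge it I would argue via the essential uniqueness of splittings underlying Corollary \ref{unique}. The colimit cocone $\decode$ of item 1 exhibits $S\otimes S$ as a splitting object of $1_S$, which is also split trivially by $(1_S,1_S)$ on $S$; the unique comparison isomorphism of the splitting lemma then determines the limit cone for this diagram as $\decode^{-1}$, and matching it against the limit $\code$ of item 2 on the common apex $S\otimes S$ identifies $\code$ with $\decode^{-1}$. Hence $\code\decode=1_S$ and $\decode\code=1_{S\otimes S}$, exhibiting $(S,\code)$ as a self-similar structure and completing the characterisation. Equivalently, one may phrase the whole proposition as the observation that the single datum of an isomorphism $\code$ (with $\decode:=\code^{-1}$) is precisely what a paired absolute colimit/limit presentation of this degenerate diagram records.
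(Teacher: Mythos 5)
Your overall route is the paper's: the paper's proof is exactly the two-step appeal you begin with --- cite the standard fact that splittings of an idempotent correspond to (co)limits of the one-object loop diagram, then invoke the characterisation of self-similar structures as splittings of identities from Remark \ref{SSasSplit} --- and your direct verifications of the four universal properties are just what that citation compresses. Your further observation that, because the loops here carry \emph{identity} endomorphisms, each individual (co)limit condition degenerates to ``the arrow is an isomorphism'' is correct, and is in fact sharper than the paper's own phrasing, which reads as though arbitrary (colimit, limit) pairs biject with splittings.

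The gap is in your discharge of the converse, at the step ``matching it against the limit $\code$ of item 2 on the common apex $S\otimes S$ identifies $\code$ with $\decode^{-1}$.'' Two limiting cones of the same diagram with the same apex need not be equal; they agree only up to a (unique) automorphism of the apex. Here, from $\code$ and $\decode^{-1}$ both being limits of the identity loop on $S$, you obtain only a unique isomorphism $\phi\colon S\otimes S\to S\otimes S$ with $\code\,\phi=\decode^{-1}$, namely $\phi=\code^{-1}\decode^{-1}$, and nothing forces $\phi=1_{S\otimes S}$. Indeed the equation you are trying to derive is not a consequence of the literal hypotheses: take any self-similar structure $(S,\code)$ and any non-identity automorphism $\theta\in\C(S,S)$ (by the remark following Corollary \ref{unique} together with Theorem \ref{no_sim_strict}, such $\theta$ exists whenever $S$ is not a unit object), and set $\decode:=\code^{-1}\theta$. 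Then $\code$ and $\decode$ are both isomorphisms, so by your own degeneracy analysis all four (co)limit conditions hold verbatim, yet $\code\decode=\theta\neq 1_S$. So no argument --- via Corollary \ref{unique} or otherwise --- can close the converse as you have framed it. The correct resolution is the reading contained in your final sentence, which is also how the paper's proof works through Remark \ref{SSasSplit}: in a self-similar structure $\decode$ is by definition the notation $\code^{-1}$, so the datum being characterised is the single isomorphism $\code$, and the converse then reduces exactly to your (correct) observation that a (co)limit of an identity loop is precisely an isomorphism. Had you led with that reading, your proof would be complete and the ``delicate point'' would dissolve; as written, the key identification step is invalid.
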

\begin{proof}
A standard result on splitting idempotents is that a splitting of an idempotent $e^2=e\in \C(A,A)$ is precisely  a pair $(f\in \C(A,B),g\in \C(B,A))$ such that $\xymatrix{ A \ar[r]^f & B }$ and $\xymatrix{ A &   B \ar[l]_g  }$ are respectively a limit and colimit of the diagram 
 $\xymatrix{ A \ar@(ur,dr)[]^{e}}$. The result then follows from the characterisation of self-similar structures given in Remark \ref{SSasSplit}.
 \end{proof}

\section{Strict self-similarity and strict associativity}
Unit objects are special cases of self-similar objects -- the distinction being that for an arbitrary self-similar object $S$, the functors $S\otimes \_$ and $\_ \otimes S$ need not be fully faithful. We now describe how Saavedra's characterisation relates strictness for both associativity and self-similarity.

\begin{lemma} \label{strict_prelim}Let $(\M,\star )$ be semi-monoidal monoid. The endomorphisms $(\_ \star 1)$ and $(1 \star \_)$  are injective, and are isomorphisms precisely when  the unique object of $M$ is the unit. % for $\_\star \_$. 
\end{lemma}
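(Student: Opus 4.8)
The plan is to read everything through the single-object structure and then recognise the statement as a restatement of Saavedra's unit condition (Definition \ref{SU}) in the language of monoids. First I would fix notation: write $S$ for the unique object of $\M$, so that $S\otimes S = S$ strictly and $1 = 1_S$ is the monoid identity, and observe that the two maps in question are exactly the morphism-actions of the tensor functors, i.e. $(\_ \star 1)$ is $(\_ \otimes S)$ and $(1 \star \_)$ is $(S\otimes \_)$ on the single hom-set $\M = \C(S,S)$. Using the interchange law $(f\star g)(h\star k) = (fh)\star(gk)$ together with $1\star 1 = 1$, I would record that both $(\_ \star 1)$ and $(1 \star \_)$ are monoid endomorphisms of $\M$; this is what makes ``isomorphism'' unambiguous and, for an injective homomorphism, equivalent to ``surjective''.

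For the injectivity claim I would invoke Convention \ref{assume_fo}: the object $S$ is faithful, so the endofunctors $\_ \otimes S$ and $S\otimes \_$ are faithful. Since $\M$ has a single object, faithfulness of such a functor is nothing other than injectivity of its action on the unique hom-set, which is precisely injectivity of $(\_ \star 1)$ (resp.\ $(1 \star \_)$). I would note in passing that this really does rely on the faithfulness convention and is not automatic from associativity: the degenerate tensor $A\otimes B = X$, $f\otimes g = 1_X$ of the remark after the semi-monoidal definition yields a one-object semi-monoidal category on which $(\_ \star 1)$ is the constant map at $1_X$.

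For the second claim I would argue that, $S$ being strictly self-similar, the identity $\alpha = 1_S : S\otimes S \to S$ already witnesses the pseudo-idempotent clause, so by Definition \ref{SU} the object $S$ is a (Saavedra) unit iff the one remaining requirement holds, namely that $\_ \otimes S$ and $S\otimes \_$ are fully faithful. In the one-object setting ``fully faithful'' means the morphism-action is bijective; given the faithfulness (= injectivity) already established, this is equivalent to fullness, i.e.\ surjectivity of $(\_ \star 1)$ and $(1 \star \_)$, which for injective monoid endomorphisms is exactly the assertion that they are isomorphisms. Chaining these equivalences gives ``isomorphisms $\iff$ $S$ is the unit''.

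I do not expect a serious obstacle: the content is essentially a dictionary translating ``faithful / full functor on a one-object category'' into ``injective / surjective monoid endomorphism'', with the pseudo-idempotent clause supplied for free by strict self-similarity. The one point to handle with care is the logical bookkeeping in the second part --- making sure bijectivity of the morphism-action is matched against \emph{full faithfulness}, and not confused with the stronger ``equivalence of categories'' of Theorem \ref{SunitResults}(3), and checking that surjectivity (fullness) is indeed the only new ingredient beyond the always-present injectivity.
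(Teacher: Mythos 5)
Your proposal is correct and follows essentially the same route as the paper's own proof: functoriality (the interchange law) makes $(\_\star 1)$ and $(1\star\_)$ monoid homomorphisms, Convention \ref{assume_fo} gives injectivity, and the isomorphism condition is matched against full faithfulness in Saavedra's characterisation (Definition \ref{SU}), with the pseudo-idempotent clause free by strictness. You merely spell out in detail the dictionary (faithful/full functor on a one-object category $=$ injective/surjective endomorphism) that the paper leaves implicit.
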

\begin{proof}
Functoriality implies that $\_ \star 1$ and $1 \star \_$ are monoid homomorphisms, %. and the identity $1_m\star 1_m=1_m$, 
and injectivity follows from the assumption (Convention \ref{assume_fo}) of faithful objects. These homomorphisms are isomorphisms precisely when they are fully faithful, in which case the unique object of $\M$ satisfies Saavedra's characterisation of a unit object.
\end{proof}

\begin{theorem}\label{no_sim_strict}
Let $(\M,\star , \tau)$ be a semi-monoidal monoid. Then $\_ \star \_$ is strictly associative if and only if the unique object of $M$ is the unit for $\_\star \_$.
\end{theorem}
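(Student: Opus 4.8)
The plan is to translate everything into statements about the two endofunctors $R = (\_ \star 1_S)$ and $L = (1_S \star \_)$ of the one-object category $\M$, which by Lemma \ref{strict_prelim} are injective monoid endomorphisms that become isomorphisms exactly when the object is the unit. Since $\M$ has a single object, the associator is a single automorphism $\tau \in \M$, and ``strictly associative'' means precisely $\tau = 1_S$. So the theorem becomes: $\tau = 1_S$ if and only if $R$ and $L$ are surjective. I would prove the two implications separately, and I expect the forward direction to carry the real content.

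For ($\Rightarrow$), assume $\tau = 1_S$. The naturality square for $\tau$ reads $\tau \circ (f \star (g \star h)) = ((f\star g)\star h)\circ \tau$; with $\tau = 1_S$ this collapses to honest associativity of $\star$ as a binary operation on $\M$. Functoriality of $\otimes$ gives $1_S \star 1_S = 1_S$. These two facts alone force $R$ and $L$ to be idempotent: $R^2(f) = (f \star 1_S)\star 1_S = f \star (1_S \star 1_S) = f \star 1_S = R(f)$, and symmetrically $L^2 = L$. The finishing move is that an injective idempotent self-map is the identity: from $R(R(f)) = R(f)$, injectivity of $R$ gives $R(f) = f$. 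Hence $R = L = 1_\M$, so $1_S$ is a two-sided unit for $\star$; equivalently $R, L$ are surjective, and Lemma \ref{strict_prelim} identifies the object as the unit.

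For ($\Leftarrow$), assume the object is the unit. By Theorem \ref{SunitResults}(1) the structure is genuinely monoidal, so we have unitors; since $S \otimes S = S$ strictly, these are automorphisms $\lambda_S, \rho_S \in \M$, and the standard identity $\lambda_I = \rho_I$ lets me set $u := \lambda_S = \rho_S$, which is also the Saavedra isomorphism $\alpha$. Instantiating MacLane's triangle identity at $(S,S,S)$ gives $(1_S \otimes \lambda_S)\circ \tau = \rho_S \otimes 1_S$, that is $L(u)\circ \tau = R(u)$. By Theorem \ref{SunitResults}(2) units are idempotent, so $R(u) = L(u)$, whence $\tau = L(u)^{-1}\circ R(u) = 1_S$.

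The main obstacle is really locating the cleanest argument in the forward direction: the temptation is to unwind the full interchange law and pentagon, but the key realization is that strict associativity together with $1_S \star 1_S = 1_S$ already makes $R$ and $L$ idempotent, after which faithfulness (injectivity, via Convention \ref{assume_fo}) does all the remaining work. I should double-check that I am entitled to the coherence facts invoked in the reverse direction -- in particular that $S \otimes S = S$ strictly lets the unitors and associator be read as elements of $\M$, and that the Saavedra isomorphism coincides with the common unitor -- but these all follow from Theorem \ref{SunitResults}.
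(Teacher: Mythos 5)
Your proposal is correct, but both directions take a genuinely different route from the paper. For $(\Rightarrow)$, the paper does not use your injective-idempotent trick: it forms the single embedding $\eta = 1\star(\_\star 1) = (1\star\_)\star 1$, transports the tensor onto the image via $F\odot G = 1\star (f\star g)\star 1$, checks that $1\odot\_$ and $\_\odot 1$ are identities on $\eta(\M)$, and then applies Lemma \ref{strict_prelim} to $(\eta(\M),\odot)\cong(\M,\star)$. Your argument --- $R(f)=f\star 1$ satisfies $R^2(f)=f\star(1\star 1)=R(f)$ by collapsed naturality plus functoriality, and an injective idempotent self-map is the identity --- is shorter, avoids the auxiliary tensor entirely, and yields the sharper conclusion that $1_S$ is an on-the-nose two-sided unit for $\star$ itself rather than for a transported tensor. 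For $(\Leftarrow)$, the paper merely cites the standard fact that the tensor at a unit object agrees up to isomorphism with the (commutative) composition; your triangle-plus-idempotency computation is more explicit and actually exhibits $\tau = 1_S$ on the nose, which the paper's one-liner leaves implicit.

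One caveat on your reverse direction: Theorem \ref{SunitResults}(2) is imported from Kock's strictly associative setting, and in a one-object non-strict category the identity $u\star 1 = 1\star u$ (for $u=\lambda_S=\rho_S$) is, via the triangle, exactly equivalent to the conclusion $\tau=1_S$, so leaning on it as literally stated skirts circularity. It is easily justified independently: naturality of the unitors in the one-object category gives $1\star f = u^{-1}fu = f\star u\,u^{-1} = f\star 1$ for all $f$, whence $R(u)=L(u)$ directly. Also, ``the'' Saavedra isomorphism is unique only up to automorphism (Corollary \ref{unique}), so you should apply Theorem \ref{SunitResults}(2) to the specific structure $(S,\lambda_S)$; neither point damages the argument.
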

\begin{proof}
$(\Rightarrow)$ 
From Lemma \ref{strict_prelim} above, and strict associativity, $1 \star (\_ \star 1) ,(1 \star \_) \star 1 : \M \rightarrow \M$ are identical injective monoid homomorphisms. Let us denote this monoid embedding by $\eta : \M \hookrightarrow \M$. By injectivity, for every arrow $F\in \eta(\M)$, there exists a unique arrow $f\in M$ satisfying $F=1\star f \star 1$.

Define a (strict) semi-monoidal tensor $(\_ \odot \_ ) : \eta (\M) \times \eta (\M) \rightarrow \eta(\M)$ by $ F \odot G = 1 \star (f\star g) \star 1$, for all $F=1\star f \star 1, G = 1 \star  g \star 1$.
It is immediate that this is well-defined, and a semi-monoidal tensor. By definition, $\eta (f\star g) = \eta (f) \odot \eta (g)$, so $(\M,\star) \cong (\eta(\M),\odot)$. % is a strict semi-monoidal isomorphism. 

Finally $(1\odot F) = 1 \star (1\star f) \star 1 = 1\star 1 \star f \star 1 = 1\star f \star 1 = F$, for  arbitrary $ F=1\star f \star 1$,
and hence $1\odot \_ : \eta (\M)\rightarrow \eta(\M)$ is the identity isomorphism. Similarly, $\_ \odot 1= Id_{\eta(\M)}$, and thus by Lemma \ref{strict_prelim}, the unique object of $(\M,\star) \cong (\eta(\M),\odot)$ is the unit object.\\
$(\Leftarrow)$ It is a standard result of monoidal categories that the endomorphism monoid of a unit object is an abelian monoid, and the tensor at this object coincides (up to isomorphism) with this strictly associative composition.
\end{proof}

\begin{remark}{\bf No simultaneous strictification}
This paper is about strictification and coherence for self-similarity and its interaction with associativity. From Theorem \ref{no_sim_strict}, strictifying the associativity of a semi-monoidal monoid will result in non-strict self-similarity; conversely, strictifying self-similarity in a strictly associative setting will give a monoid with a non-strict semi-monoidal tensor (Proposition \ref{not_strict}).% For this reason, we refer to the above result as the `no simultaneous strictification' theorem.
\end{remark}

\begin{examples}
\label{infinitematrices}{\bf Finite and infinite matrices}
An illustrative example is given by infinitary matrix categories. Countable matrices over a $0$-monoid $R$ enriched with a suitable (partial, infinitary) summation\footnote{See  \cite{PH13a} for a suitable unification of various notions of summation from theoretical computer science and analysis, and some associated category theory.}  $\mathcal R$ form a category $\bf Mat_\mathcal R$, with $Ob(Mat_\mathcal R)=\mathbb N \cup \{ \infty \}$. Hom-sets  $\bf {Mat_\mathcal R} (a,b)$ are subsets\footnote{The set of allowable matrices is generally restricted by some summability condition, the details of which do not affect the substance of the following discussion.} of the set of functions ${\bf Fun}([0,b)\times [0,a), \mathcal R)$, and composition is given by the Cauchy product. 

Assuming  technical conditions on summation are satisfied, the subcategory of finite matrices has a strictly associative monoidal tensor, denoted $\_ \oplus \_$. On objects it is simply addition; given arrows,  $m\in Mat_R(a,b)$, $n\in Mat_R(p,q)$,  with $a,b,p,q<\infty$, it is the familiar block diagonal formula $ \left(m\oplus n\right)(x,y)  = \left\{ \begin{array}{lr} 
m(x,y) & x<a \ , \ y<b \\
n(x-a,y-b) & x\geq a \ , \ y\geq b \\
0 & \mbox{otherwise.}
\end{array} \right. 
 $
 
Although this definition cannot be extended to {infinite} matrices, the endomorphism monoid ${\bf Mat_\mathcal R}(\infty, \infty)$ can be given a {non-strict} semi-monoidal tensor (again, assuming technicalities on summation), such as 
$(A\oplus_c  B)(x,y) = \left\{ \begin{array}{lr} 
\vspace{0.4em}
A\left(\frac{x}{2},\frac{y}{2}\right) & x,y \ \mbox{ even,}	\\  \vspace{0.4em}
B\left(\frac{x-1}{2},\frac{y-1}{2}\right) & x,y \ \mbox{ odd,}	\\ 
0 & \mbox{ otherwise.}
\end{array}\right.
$

 \noindent
 This is the familiar `interleaving' of infinite matrices, determined by the Cantor pairing $c:\mathbb N \uplus \mathbb N\rightarrow \mathbb N$ given by $c(n,i)=2n+i$ (and used to great effect in modelling the structural rules of linear logic \cite{GOI,GOI2}). Any such isomorphism $\code: \mathbb N \uplus \mathbb N \rightarrow \mathbb N$ will determine a (non-strict) semi-monoidal tensor on this monoid.   However, by Theorem \ref{no_sim_strict}, no strict semi-monoidal tensor on ${\bf Mat_\mathcal R} (\infty,\infty)$ may  exist.
\end{examples}

\section{The group of canonical isomorphisms}\label{Thompson}
In a semi-monoidal monoid, the isomorphisms canonical for associativity are closed under composition, tensor, and inverses, and thus form a group with a semi-monoidal tensor. As demonstrated in \cite{FL}, in the free case this group is the well-known Thompson group $\mathcal F$ (see \cite{CFP} for a non-categorical survey). An algebraic connection between this group and associativity laws is well-established (see \cite{MBr} for a more categorical perspective), and the tensor was given in \cite{KB} -- although not in categorical terms. An explicit  connection with semi-monoidal monoids was observed in \cite{MVL07} where $\mathcal F$ is given in terms of canonical isomorphisms and single-object analogues of projection  / injection arrows for the tensor. 

An interesting connection between Thomson's group $\mathcal F$ and the theory of idempotent splittings is given in \cite{BG}, in the context of (connected, pointed) CW complexes. The {\em  unsplittable} homotopy idempotents of a CW complex $K$ are characterised by the fact that they give rise to a copy of $\mathcal F$ in $\pi_1(K)$. The categorical interpretation of this is unfortunately beyond the scope of this paper.

\section{A simple coherence result for self-similarity}
We now give a simple result that guarantees commutativity for a class of diagrams built inductively from the code / decode isomorphisms of a self-similar structure, and the relevant semi-monoidal tensor. This is modelled very closely indeed on MacLane's original presentation of his coherence theorem for associativity (briefly summarised in Section \ref{MCL_coherence}), in order to describe the interaction of self-similarity and associativity.

\subsection{Coherence for associativity - the unitless setting}\label{MCL_coherence}
We first briefly reprise some basic definitions and results on coherence for associativity, taken from MacLane's original presentation \cite{MCL}, in the semi-monoidal setting. This is partly to fix notation and terminology, and partly to ensure that the absence of a unit object does not lead to any substantial difference in theory.   We also restrict ourselves to the monogenic case, as this suffices to describe the interaction of associativity and self-similarity. 

For more general, structural approaches to coherence, we refer to \cite{MK2,AP,JS} -- these inspired the approach taken  in Section \ref{self_strict} onwards.

\begin{definition}\label{Wdef}
 The set $Tree$ of {\bf free non-empty binary trees} over some symbol $x$ is is inductively defined by: 
$x\in Tree $, and for all $u,v\in Tree $ then $(u\Box v)\in Tree $.
The {\bf rank} of a tree $t\in Tree$ is  the number of occurrences of the symbol $x$ in $t$, or equivalently, the number of leaves of $t$.

We denote (the unitless version of) MacLane's {\bf monogenic monoidal category} by $(\W,\Box)$. This is defined by: $Ob(\W)=Tree$ and there exists a unique arrow $(t\leftarrow s)\in \mathcal\W(s,t)$ iff $rank(s)=rank(t)$. Composition is determined by uniqueness. %, so $(t\leftarrow s )(s\leftarrow r) = (t\leftarrow r)$.  
Given $p,q\in Ob(\W)$, their semi-monoidal tensor is $p\Box q$; the tensor of arrows is again determined by uniqueness. %, so $(q\leftarrow p) \Box (t\leftarrow s) = (q\Box t \leftarrow p\Box s)$. 
\end{definition}

\begin{remark} MacLane's definition \cite{MCL} included the empty tree as a unit object, giving a monoidal, rather than semi-monoidal, category. % In the above definition, there is no unit object; %(since the functor $u\Box\_:\W\rightarrow \W$ is strictly rank-increasing, for all nonempty trees).  
%however, 
Applying the common technique of adjoining a strict unit to a semi-monoidal category will recover MacLane's original definition, and MacLane's original theory in the exposition below.
\end{remark}

\begin{definition}\label{Wsub} Given an object $A$ of a semi-monoidal category $(\C,\otimes ,  \tau_{\_,\_,\_})$, MacLane's  %(unitless version of the) 
{\bf associativity substitution functor} 
$\W Sub_{A\in Ob(\C)}  :(\W,\Box) \rightarrow (\C,\otimes)$ is defined inductively below. When the context is clear, we elide the subscript on $\W Sub_\_$.
%, and write 
%$\W Sub :(\W,\Box) \rightarrow (\C,\otimes)$
%when the target object and category are clear from the context.
%The inductive definition is as follows:
\begin{itemize}
\item {\bf (Objects)} For all $u,v\in Ob(\W)$,
\begin{itemize} 
\item $\W Sub(u\Box v) = \W Sub(u) \otimes \W Sub(v)$
\item $\W Sub(x) =A\in Ob(\C)$.
\end{itemize}% Given $u\in Ob(\W)$, then $\W Sub_A (u) = Sub_A(u)$, as in Definition \ref{instfun}.%Note that $Ob(\mathcal\W)=Tree =Ob(F_A)$. Thus $\W Sub_A(u)=u$, for all objects $u\in \mathcal Ob(F_A)$.
\item {\bf (Arrows)} Given $a,b,c,u,v\in Tree $, where $rank(u)=rank(v)$, % let us denote the unique arrow in $\mathcal W(u,v)$ by $(v\leftarrow u)$. The functor $\W Sub$ is defined inductively on arrows, as follows:
\begin{itemize}
\item $\W Sub(a\leftarrow a) = 1_{\W Sub(a)} \in \C(\W Sub(a),\W Sub(a))$.%mathcal F_x(a,a)$
\item $\W Sub((a\Box b)\Box c\leftarrow a\Box(b\Box c))= \  \tau_{\W Sub(a),\W Sub(b),\W Sub(c)}$%\in \\F_{x}(a\Box(b\Box c), (a\Box b)\Box c)$.
\item $\W Sub(a\Box v \leftarrow a\Box u)=1_{\W Sub(a)} \otimes \W Sub(v\leftarrow u)$
\item $\W Sub(b\Box u \leftarrow a\Box u)=\W Sub(b\leftarrow a) \otimes 1_{\W Sub(u)}$
\end{itemize}
\end{itemize}
\end{definition}

\begin{remark}\label{Pentagon_conditions}It is non-trivial that $\W Sub:(\W,\Box)\rightarrow (\C_A,\otimes)$ is a semi-monoidal functor. This is a consequence of MacLane's Pentagon condition \cite{MCL}. 
\end{remark}

\begin{remark} As $\W$ is posetal, all diagrams over $\W$ commute, so all (canonical) diagrams in $\C$ that are the image of a diagram in $\W$ are guaranteed to commute. Naturality and substitution are then used \cite{MCL} to extend this to the general setting.%; however, to describe the interaction of self-similarity \& associativity it suffices to consider the monogenic case.
%For the purposes of this paper, we work with monogenic categories of various forms, so we do 
\end{remark}

\subsection{A preliminary coherence result for self-similarity}
We now exhibit a class of diagrams based on identities, tensors, and the code / decode maps for a self-similar structure that are guaranteed to commute. This is based on a substitution functor from a posetal monoidal category that contains MacLane's $(\W , \Box )$ as a semi-monoidal subcategory.

\begin{definition}\label{Xdef}
The {\bf monogenic self-similar category} $(\X,\Box)$ was defined in \cite{PH98} as follows:
\begin{itemize}
\item {\bf Objects} $Ob(\X)=Tree$
\item {\bf Arrows} There exists unique $(b \leftarrow a)\in \X(a,b)$ for all $a,b\in Ob(\X)$.
\item {\bf Composition} This is determined by uniqueness.%, so $(c\leftarrow b)(b\leftarrow a) = (c\leftarrow a)$, for all $a,b,c\in Ob(\X)$.
\item {\bf Tensor} Given $u,v\in Ob(\X)$, their tensor is the binary tree $u\Box v$. The definition on arrows again follows from uniqueness.
\item {\bf Unit object} All objects $e\in Ob(\X)$ are unit objects;  the unique arrow $(e\leftarrow e\Box e)$ is an isomorphism, and the functors $(e\Box \_),(\_ \Box e):\X\rightarrow \X$ are fully faithful. %Thus $(\X,\Box)$ is unital.%will act as a unit object.
\end{itemize}
\end{definition}

\begin{remark}
Abstractly, $(\X,\Box)$ may be characterised as the {\em free monogenic indiscrete monoidal category}. Thus, it is monoidally equivalent to the terminal monoidal category --- in the semi-monoidal setting, it is more interesting.%its semi-monoidal equivalences are more interesting.
\end{remark}

\begin{definition}\label{Xsub}
Let $(S,\code)$ be a self-similar structure of a semi-monoidal category $(\C,\otimes )$. We define $\X Sub_\dc : (X,\Box) \rightarrow (\C,\otimes)$, the  {\bf self-similarity substitution functor}, inductively by, for  all $u,v,p,q\in Ob(\X )$:
\begin{itemize}
\item $\X Sub(x)=S$, and $\X Sub(u\Box v) = \X Sub(u) \otimes \X Sub(v)$.
\item $\X Sub(x\leftarrow u\Box v) = \code( \X Sub(x\leftarrow u) \otimes \X Sub(x \leftarrow v))$
\item $\X Sub(u\Box v\leftarrow x) = \decode (\X Sub(u \leftarrow x) \otimes \X Sub(v \leftarrow x))$
\item $\X Sub(u\Box v \leftarrow p\Box q) = \X Sub(u\Box v \leftarrow x) \X Sub(x \leftarrow p \Box q)$
\end{itemize}
(We again omit the subscript when the context is clear).
\end{definition}

\begin{remark}\label{Xconditions} In stark contrast to MacLane's substitution functor, it is immediate that $\X Sub : (X,\Box) \rightarrow (\C,\otimes)$ is a strict semi-monoidal functor -- no coherence conditions are needed to ensure functoriality. %However, unless $S$ is a unit object for $(\C,\otimes)$, the substitution functor $\X Sub : (X,\Box) \rightarrow (\C,\otimes)$ is a semi-monoidal, but not monoidal, functor. 
\end{remark}  

We may now give a preliminary coherence result on self-similarity. 

\begin{lemma}\label{selfsim-coherence} Let $(S,\code)$ be a self-similar structure of a semi-monoidal category $(\C,\otimes )$, and let $\X Sub: (X,\Box)\rightarrow (\C,\otimes)$ be as above. Then every diagram  over $\mathcal C$ of the form $\X Sub (\mathfrak D)$, for some diagram $\mathfrak D$ over $\X$, is guaranteed to commute. 
\end{lemma}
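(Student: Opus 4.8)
The plan is to mimic MacLane's strategy for associativity coherence, exactly as signposted in the remarks following Definition~\ref{Wsub}: reduce commutativity in $\C$ to commutativity in the posetal source category, where it holds for purely formal reasons, and then transport it along the substitution functor. The two facts that drive the argument are that $(\X,\Box)$ is \emph{indiscrete} and that $\X Sub$ is an honest functor.

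I would then run the argument in two moves. First, by Definition~\ref{Xdef}, for any pair of objects $a,b \in Ob(\X)$ there is a \emph{unique} arrow $(b \leftarrow a)$; consequently any two arrows of $\X$ with the same source and target are equal, so any two directed paths through a diagram $\mathfrak D$ sharing endpoints compose to one and the same arrow, and $\mathfrak D$ commutes in $\X$ with nothing to check. Second, since $\X Sub : (\X,\Box) \to (\C,\otimes)$ is a functor (Remark~\ref{Xconditions}) it preserves identities and composites, and a functor sends commuting diagrams to commuting diagrams: if $f_n \cdots f_1$ and $g_m \cdots g_1$ are parallel composites appearing in $\mathfrak D$, they agree in $\X$ by the first move, so $\X Sub(f_n)\cdots\X Sub(f_1) = \X Sub(g_m)\cdots\X Sub(g_1)$ in $\C$. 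Hence $\X Sub(\mathfrak D)$ commutes, which is precisely the claim.

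The only step carrying genuine mathematical content --- and the one I expect to be the real obstacle --- is the functoriality of $\X Sub$ invoked above and asserted in Remark~\ref{Xconditions}. In sharp contrast to MacLane's $\W Sub$, whose functoriality rests on the Pentagon condition (Remark~\ref{Pentagon_conditions}), here no coherence constraint is needed: well-definedness is immediate because the defining clauses of Definition~\ref{Xsub} are mutually exclusive on the shapes of source and target, and preservation of identities and composition should follow by a routine induction on the rank of the trees involved, using only the inverse relations $\code\,\decode = 1_S$ and $\decode\,\code = 1_{S\otimes S}$ of a self-similar structure (equivalently, the splitting description of Remark~\ref{SSasSplit}). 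Intuitively, every tree is canonically collapsed to the single leaf $x$, and the cancellations $\code\,\decode$ and $\decode\,\code$ annihilate the intermediate copies of $S$ and $S\otimes S$ produced by factoring through $x$; this is exactly why no analogue of the Pentagon is required, and why the self-similar case is formally simpler than the associative one.
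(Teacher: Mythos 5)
Your proof is correct and is essentially the paper's own argument: the paper's proof reads, in full, ``As $(\X,\Box)$ is posetal, $\mathfrak D$ commutes; by functoriality so does $\X Sub(\mathfrak D)$.'' Your additional sketch of why $\X Sub$ is functorial (induction on rank, using only $\code\,\decode = 1_S$ and $\decode\,\code = 1_{S\otimes S}$) is sound but is material the paper already disposes of in Remark~\ref{Xconditions}, where functoriality is taken as immediate, so it is elaboration rather than a genuinely different route.
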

\begin{proof} As $(\X,\Box)$ is posetal, $\mathfrak D$ commutes; by functoriality so does $\X Sub(\mathfrak D)$.
\end{proof}

\begin{remark}\label{required_coherence}
The diagrams predicted to commute by Lemma \ref{selfsim-coherence} are `canonical for self-similarity', with arrows built from code / decode isomorphisms, identities, and the tensor. The more important question is about diagrams that are `canonical for self-similarity \& associativity' --- when may these be guaranteed to commute? %(i.e. beyond what is predicted by naturality of associativity isomorphisms). 
This follows as a special case of a more general result (Section \ref{full_coherence}).
\end{remark}

\begin{remark}\label{doesitfactor}{\em \bf Does $\W Sub$ factor through $\X Sub$?}
There is an immediate semi-monoidal embedding $\iota : (\W,\Box) \rightarrow (\X,\Box)$.% of MacLane's $\W$ category into the monogenic self-similar category $\X$. 
An obvious question is whether, or under what circumstances,  the above substitution functors will factor through this embedding -- i.e. when does the diagram of Figure \ref{killer_inclusion} commute?
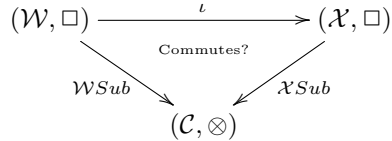
\begin{figure}[b]\caption{Under what circumstances does the following diagram commute?}\label{killer_inclusion}
\[ 
\xymatrix{
(\W,\Box) \ar[rr]^\iota	 \ar[dr]_{\W Sub} 	&	\ar@{}[d]|>>>>>>>>{\mbox{\tiny Commutes?}}		& (\X,\Box) \ar[ld]^{\X Sub} \\
								& (\C ,\otimes)	&
	}
\]	
\end{figure}

It is immediate that this can only commute under very special conditions. 
%Compare Remark \ref{Pentagon_conditions} with Remark \ref{Xconditions}. 
Functoriality of $\W Sub$ requires coherence conditions (i.e. MacLane's pentagon),  whereas none are required for the functoriality of $\X Sub$.  Further, commutativity of this diagram %Diagram \ref{killer_inclusion} 
would give a decomposition of canonical (for associativity) isomorphisms  of $(\C,\otimes)$ into `more primitive' operations built from $\{ \code , \decode , \otimes\}$; in particular, $\tau_{S,S,S} = (\decode \otimes 1_S)(1_S \otimes \code)$. 
A slight generalisation of Isbell's argument on the skeletal category of sets \cite{MCL} would show that when $(\C,\otimes)$ admits projections, $S$ is the terminal object. %$\C(S,S)=\{ 1_S\}$.
Instead of giving a direct proof of this, we will give a more general result in Corollary \ref{inclusion_kills}.% that holds without assuming projections.%;  Diagram \ref{killer_inclusion} commutes precisely when $S$ is a unit object.
\end{remark}

\section{Strictification for self-similarity}\label{self_strict}
We first describe a strictification procedure for self-similarity that gives a semi-monoidal equivalence between a monoid and a monogenic category, then use this to give a coherence theorem that answers the question posed in Remark \ref{required_coherence} in a more general setting.  This strictification procedure generalises the `untyping' construction of \cite{PH98,PH99} (and indeed, corrects it in certain cases -- see Remark \ref{inductive_def}). %, and takes the form of a fully faithful strict semi-monoidal functor from the semi-monoidal category freely generated by a self-similar object to a semi-monoidal monoid.

\begin{definition}\label{ins_def} Given a semi-monoidal category $(C,\otimes ,\tau_{\_,\_,\_})$ and arbitrary $S\in Ob(\C)$, the semi-monoidal category {\bf freely generated by $S$}, denoted $\mathcal F_{S}$, is defined analogously to the usual monoidal definition. The assignment $Inst:Tree\rightarrow Ob(\C)$ is defined inductively by $Inst(x)=S\in Ob(\C)$, and $Inst(p\Box q) = Inst(p) \otimes Inst(q)$ and based on this, objects and arrows are given by 
$Ob(\F_S)=Tree$, and $\F_S(u,v)= \C(Inst(u),Inst(v))$.

Composition is inherited in the natural way from $\C$, as is the tensor: on objects this is simply the formal pairing $\_ \Box \_$, and given arrows $f\in \F_S(u,v)$, $g\in \F_S(x,y)$ we have 
\[ f\Box g = f\otimes g\in \F_S(u\Box x,  v\Box y) = \C(Inst(u)\otimes Inst(x) ,Inst(v)\otimes Inst(y)) . \]
The assignment $Inst:Tree\rightarrow Ob(\C)$ extends in a natural way to a strict semi-monoidal functor;  using computer science terminology, we call this the {\bf instantiation functor} $Inst_S: (\F_S,\Box)\rightarrow (\C,\otimes)$. It is as above on objects, and the identity on hom-sets, as $\F_S(u,v) = \C(Inst(u),Inst(v))$. It is immediate that this epic strict semi-monoidal functor is a semi-monoidal equivalence of categories. When the object is clear from the context, we simply write $Inst: (\F_S,\Box)\rightarrow (\C,\otimes)$

The image of $Inst_S$ is the full semi-monoidal subcategory  of $\C$ inductively generated by the object $S$, together with the tensor $\_ \otimes \_$. We refer to this as the semi-monoidal category {\bf generated by $S$ within $(\C,\otimes)$}, denoted $(\C_S,\otimes)$.
\end{definition}

Based on the above definitions, the following are immediate:

\begin{lemma}\label{monic-epic}
Let $(S,\code)$ be a self-similar structure of a semi-monoidal category $(\C,\otimes)$. Then the small semi-monoidal categories $(\W,\Box)$, $(\X,\Box)$ and $(\F_S,\Box)$ have the same set of objects, and 
\begin{enumerate}
\item The tuple $(x,\code)$ is a self-similar structure of $(\F_S,\Box)$.
\item The functors $\X Sub:(X,\Box)\rightarrow (\F_S,\Box)$ and $\W Sub(W,\Box) \rightarrow (F_S,\Box)$ are monic.
\item The following diagram commutes:
\[
\xymatrix{ (\W,\Box) \ar[rr]^{\W Sub} \ar[drr]|{\W Sub} 	&& (F_S,\Box) \ar[d]|{Inst}  && (\W,\Box) \ar[ll]_{\X Sub} \ar[dll]|{\X Sub}  \\ 
											&& (\C_S,\otimes) 		&&	\\
		}
\]
 \end{enumerate}
 \end{lemma}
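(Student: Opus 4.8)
The plan is to derive all three parts from one structural observation: the instantiation functor $Inst_S:(\F_S,\Box)\rightarrow(\C,\otimes)$ is a strict semi-monoidal functor that is the identity on hom-sets, since $\F_S(u,v)=\C(Inst(u),Inst(v))$ with composition and tensor inherited verbatim from $\C$. That all three categories share the object set $Tree$ is immediate from Definitions \ref{Wdef}, \ref{Xdef} and \ref{ins_def}. For part (1), I would note that $\F_S(x\Box x,x)=\C(Inst(x)\otimes Inst(x),Inst(x))=\C(S\otimes S,S)$, so $\code$ is literally an arrow $x\Box x\rightarrow x$ of $\F_S$, whose $\C$-inverse $\decode$ lies in $\F_S(x,x\Box x)=\C(S,S\otimes S)$. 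Because $\F_S$ inherits composition from $\C$, the identities $\code\,\decode=1_x$ and $\decode\,\code=1_{x\Box x}$ hold in $\F_S$ exactly as they do in $\C$, so $(x,\code)$ is a self-similar structure of $(\F_S,\Box)$.

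For part (2), I would read $\X Sub:(\X,\Box)\rightarrow(\F_S,\Box)$ and $\W Sub:(\W,\Box)\rightarrow(\F_S,\Box)$ as built from the self-similar structure $(x,\code)$ and generating object $x$ just produced. Both functors are strict semi-monoidal and fix the generator ($\X Sub(x)=x=\W Sub(x)$), so an easy induction over $Tree$ shows each acts as the identity on objects. Since $\X$ is indiscrete and $\W$ is posetal, every hom-set of the source is a singleton (or empty), so an arrow is determined by its source and target; being the identity on objects then forces injectivity on arrows. A functor that is injective on both objects and arrows is monic in $\mathbf{Cat}$, which gives the claim.

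For part (3), the key is that $Inst$, being the identity on hom-sets, preserves \emph{on the nose} the identities, the tensor, the associators $\tau$, and the code/decode maps: the structural arrows of $\F_S$ used in the inductive clauses of Definitions \ref{Wsub} and \ref{Xsub} are literally the corresponding arrows of $\C$. Since $\W Sub$ and $\X Sub$ into $\C_S$ are assembled inductively from precisely these same data (with $S=Inst(x)$, and $\code,\decode$ as above), a clause-by-clause induction over the object- and arrow-cases yields $Inst\circ\W Sub=\W Sub$ and $Inst\circ\X Sub=\X Sub$, so both triangles commute. There is no deep obstacle here; the only point that genuinely needs checking is that the associativity isomorphisms $\tau$ and the maps $\code,\decode$ living in $\F_S$ coincide with those of $\C$ under the identity-on-hom-sets $Inst$, so that the inductive definitions match term by term — once that is recorded, each of the three statements is indeed immediate.
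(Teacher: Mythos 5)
Your proposal is correct and takes essentially the same approach as the paper, which states Lemma \ref{monic-epic} without proof as ``immediate'' from Definitions \ref{Wdef}, \ref{Xdef} and \ref{ins_def}: your observations that $Inst$ is the identity on hom-sets, that both substitution functors are the identity on objects with posetal/indiscrete sources (forcing monicity), and that the structural arrows $t$, $\code$, $\decode$ of $\F_S$ are literally arrows of $\C$, are exactly the definitional unwinding the paper takes for granted. Nothing further is needed.
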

 \begin{remark}
The commutativity of the left hand triangle in the above diagram is well-established, and part of a standard approach to coherence for associativity and other properties. In particular, Joyal \& Street phrased MacLane's theorem as an equivalence between the free monoidal category on a category and the free strict monoidal category on a category (see \cite{JS} for details and extensions of this approach).
 \end{remark}
 
 \subsection{Functors from categories to monoids}
The monic functor $\X Sub:(X,\Box)\rightarrow (\F_S,\Box)$ specifies a distinguished wide semi-monoidal subcategory of $(\F_S,\Box)$; we use the following notation and terminology for its arrows:
\begin{definition}\label{gencodedecode}
Given a self-similar structure $(S,\code)$ of a semi-monoidal category, we define an object-indexed family of arrows, the {\bf generalised code isomorphisms}, by  $\{ \code_u = \X Sub(x\leftarrow u) \in \F_S(u,x)\}_{u\in Ob(\F_S)}$.  We refer to their inverses, $\{ \decode_u= \X Sub(u\leftarrow x) \in \F_S(x,u) \}_{u\in Ob (\F_S)}$ as the {\bf generalised decode isomorphisms}.
\end{definition}

\begin{remark}\label{gencode_as_splittings}
As observed in Remark \ref{nat_trans} below, the above object-indexed families of arrows are the components of a natural transformation. An alternative perspective is that $(\code_u,\decode_u)$ is a splitting of $1_u$, and $(\decode_u,\code_u)$ is a splitting of $1_x$.  The unique isomorphism $\X Sub(v\leftarrow u)=\decode_v\code_u\in \F_S(u,v)$ is then the isomorphism exhibiting the uniqueness up to isomorphism of idempotent splittings described in Corollary \ref{unique}.
\end{remark}

\begin{remark}\label{inductive_def} As $(\F_S,\Box)$ is freely generated, we may give an inductive characterisation of the generalised code arrows by:
\begin{itemize}
\item $\code_x=1_S\in \C(S,S)=\F_S(x,x)$.
\item $\code_{x\Box x} = \code\in\C(S\otimes S,S)= \F_S(x\Box x,x)$
\item $\code_{u\Box v} = \code_{x\Box x}(\code_u\Box \code_v)\in \F_S(x,u\Box v)$.
\end{itemize}
and similarly for the generalised decode arrows. This is used as a definition in \cite{PH99}, where it is (incorrectly) assumed that $(C_S,\otimes) \cong (\F_S,\Box)$ in every case -- an assumption holds for the particular examples considered there, but not  more generally.\end{remark}
These generalised code / decode arrows allow us to define a fully faithful functor from $\F_S$ to the endomorphism monoid $\F_S(x,x)$, considered as a single-object category.
\begin{definition}\label{gencon}
Let $(S,\code)$ be a self-similar object of a semi-monoidal category $(\C,\otimes)$. We denote the endomorphism monoid $\F_S(x,x)$, considered as a single-object category, by $End(x)$, and define the {\bf generalised convolution functor} $\Phi_{\code} : \F_S\rightarrow End(x)$ by 
\begin{itemize}
\item {\bf Objects} $\Phi_{\code} (A)=x$, for all $A\in Ob(\F_S)$.
\item {\bf Arrows} Given $f\in \F_S(A,B)$, then $\Phi_{\code} (f)= \code_Bf\decode_A$, where $\code_\_$ and $\decode_\_$ are as in definition \ref{gencodedecode}.
\end{itemize}
When the self-similar structure in question is apparent from the context, we will omit the subscript and write $\Phi : \F_S\rightarrow End(x)$.
\end{definition}
\begin{proposition}The generalised convolution functor given above is indeed a fully faithful functor.
\end{proposition}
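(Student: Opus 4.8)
The plan is to verify directly that $\Phi_{\code}$ is a functor, and then establish faithfulness and fullness by exhibiting an explicit inverse bijection on each hom-set. Since $\Phi_{\code}$ sends every object to the single object $x$, the content of the claim lives entirely at the level of arrows: for each pair $A,B\in Ob(\F_S)$ we must understand the map $\C(Inst(A),Inst(B))=\F_S(A,B)\to\F_S(x,x)=End(x)$ given by $f\mapsto\code_B\,f\,\decode_A$.

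First I would check functoriality. Identities are easy: $\Phi_{\code}(1_A)=\code_A\,1_A\,\decode_A=\code_A\decode_A$, and since $(\code_A,\decode_A)$ is a splitting of $1_x$ (Remark \ref{gencode_as_splittings}), this equals $1_x=1_{\Phi_{\code}(A)}$. For composition, given $f\in\F_S(A,B)$ and $g\in\F_S(B,C)$, I would compute
\[
\Phi_{\code}(g)\,\Phi_{\code}(f)=\code_C\,g\,\decode_B\,\code_B\,f\,\decode_A=\code_C\,g\,f\,\decode_A=\Phi_{\code}(gf),
\]
where the cancellation $\decode_B\code_B=1_B$ uses the other splitting relation, namely that $(\decode_B,\code_B)$ realises $1_B$ (equivalently, $\code_B$ and $\decode_B$ are mutually inverse isomorphisms by Definition \ref{gencodedecode}). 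So functoriality reduces to the two inverse relations between the generalised code and decode isomorphisms.

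Next I would handle faithfulness and fullness simultaneously by producing the inverse map. Because each $\code_B$ and $\decode_A$ is an isomorphism, the assignment $h\mapsto\decode_B\,h\,\code_A$ sends $End(x)=\F_S(x,x)$ back into $\F_S(A,B)$, and the splitting relations $\decode_B\code_B=1_B$, $\code_A\decode_A=1_A$ show this is a two-sided inverse to $f\mapsto\code_B f\decode_A$: indeed $\decode_B(\code_B f\decode_A)\code_A=f$ and $\code_B(\decode_B h\code_A)\decode_A=h$. Thus $\Phi_{\code}$ restricts to a bijection $\F_S(A,B)\to\F_S(x,x)$ on every hom-set, which is precisely faithfulness (injectivity) together with fullness (surjectivity onto $\C(x,x)=\F_S(\Phi_{\code}A,\Phi_{\code}B)$).

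I do not expect a serious obstacle here, since the result is essentially the observation that conjugation by an isomorphism is a bijection on arrows; the only thing to be careful about is matching up \emph{which} of the two splitting relations from Remark \ref{gencode_as_splittings} is invoked at each cancellation, and confirming that $\code_A,\decode_A$ are genuinely two-sided inverses in $\F_S$ (not merely one-sided). This is guaranteed because $\decode_u=\code_u^{-1}$ by construction in Definition \ref{gencodedecode}, so both composites collapse to the relevant identity. The mild subtlety worth stating explicitly is that fullness is meaningful precisely because $\Phi_{\code}$ collapses all objects to $x$: the target hom-set $\F_S(\Phi_{\code}A,\Phi_{\code}B)$ is always $End(x)$ regardless of $A,B$, and the conjugation bijection surjects onto it.
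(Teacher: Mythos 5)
Your proof is correct and takes essentially the same route as the paper: both arguments reduce everything to the fact that $\code_u$ and $\decode_u$ are mutually inverse, with functoriality and faithfulness obtained by the cancellations $\decode_B\code_B=1_B$ and $\code_A\decode_A=1_x$. Your handling of fullness is in fact marginally tidier than the paper's: by exhibiting the two-sided inverse $h\mapsto \decode_B\, h\, \code_A$ you get a bijection $\F_S(A,B)\rightarrow End(x)$ for every pair of objects, whereas the paper verifies surjectivity only on the hom-set $\F_S(x,x)$ (via $\Phi(h)=h$, which uses $\code_x=1_S$), leaving the other pairs implicit.
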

\begin{proof}
For all $a\in Ob(\F_S)$, $\Phi  (1_a) = 1_x$. Given $f\in \F_S(a,b)$, $g\in \F_S(b,c)$, then $\Phi (gf)= \code_c gf \decode_a = \code_c g \decode_b\code_b f \decode_a = \Phi  (g) \Phi  (f)$, and thus $\Phi $ is a functor. For all $h\in \F_S(x,x)$, $\Phi  (h)=h\in End(x)= \F_S(x,x)$, and thus $\Phi $ is full. Finally, given $f,f'\in \F_S(a,b)$, then
\[ \Phi  (f) =  \Phi  (f') \ \Leftrightarrow \ \code_bf\decode_a=\decode_b f'\code_a \ \Leftrightarrow \ \decode_b\code_b f \decode_a\code_a = \decode_b\code_b f' \decode_a\code_a \ \Leftrightarrow \ f=f' \]
and thus $\Phi :\F_S \rightarrow End(x)$ is faithful.
\end{proof}
A simple corollary is that the category freely generated by a self-similar object, and the endomorphism monoid of that object (considered a a single-object category) are equivalent:
\begin{corol}\label{equiv_cats}
Let $(S,\code)$ be a self-similar object of a semi-monoidal category $(\C,\otimes)$. Then the categories $\F_S$ and $End(x)$ are equivalent.
\end{corol}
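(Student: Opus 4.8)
The plan is to invoke the standard characterisation of equivalences: a functor $F:\mathcal{A}\rightarrow \mathcal{B}$ is an equivalence of categories precisely when it is fully faithful and essentially surjective on objects. The preceding Proposition has already done the substantial work by establishing that the generalised convolution functor $\Phi_{\code}:\F_S\rightarrow End(x)$ is fully faithful, so the only thing left to check is essential surjectivity, and from this the corollary follows at once.

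First I would observe that essential surjectivity is not merely satisfied but trivial. The category $End(x)$ has a single object, namely $x$, and by the definition of $\Phi_{\code}$ (Definition \ref{gencon}) we have $\Phi_{\code}(A)=x$ for every $A\in Ob(\F_S)$. Since $\F_S$ is nonempty (it contains $x$ itself, among others), the functor $\Phi_{\code}$ is in fact surjective on objects, which a fortiori gives essential surjectivity. Combining this with full faithfulness, I conclude that $\Phi_{\code}$ is an equivalence of categories, and hence that $\F_S$ and $End(x)$ are equivalent.

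There is no genuine obstacle in this argument; the content lies entirely in the Proposition, and the corollary is a one-line consequence. The only point worth flagging is the usual one: passing from `fully faithful and essentially surjective' to an explicit adjoint equivalence requires a choice of quasi-inverse, which here amounts to a choice of the generalised decode arrows $\decode_u$. If an explicit quasi-inverse is desired, one can send the unique object $x$ to any fixed object of $\F_S$ and transport arrows along the splittings $(\code_u,\decode_u)$ of $1_u$ recorded in Remark \ref{gencode_as_splittings}; but this is not needed for the bare equivalence asserted in the statement.
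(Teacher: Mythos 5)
Your proposal is correct and takes essentially the same approach as the paper: full faithfulness is imported from the preceding Proposition, and the equivalence is completed by a one-line check of essential surjectivity. The only cosmetic difference is that the paper verifies isomorphism-density by exhibiting the isomorphisms $\code_u\in\F_S(u,x)$, $\decode_u=\code_u^{-1}$ for every $u\in Ob(\F_S)$ --- exactly the quasi-inverse data you flag at the end --- whereas you note the even more immediate fact that $\Phi$ is surjective on objects; both observations are trivial here and interchangeable.
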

\begin{proof}
Since $\Phi$ is fully faithful, it simply remains to prove that it is isomorphism-dense. For arbitrary $u\in Ob(\F_S)$, the generalised code/decode arrows $\code_u\in \F_S(u,x)$ and $\decode_u=\code_u^{-1}\in \F_S(x,u)$ exhibit the required isomorphism $End(x)\cong \F_S(x,x) \cong \F_S(u,u)$.
\end{proof}
In Definition \ref{stardef} below, we give a semi-monoidal tensor on $End(x)$ that makes the above equivalence a semi-monoidal equivalence of categories. 

\begin{remark}\label{nat_trans}
Corollary \ref{equiv_cats} guarantees the existence of suitable functors exhibiting this equivalence of categories; more explicitly, let us denote the obvious inclusion by $\iota:End(x)\hookrightarrow \F_S$. Then $\Phi\iota=Id_{End(x)}$, and  there is a natural transformation from $\iota\Phi$ to $Id_{\F_S}$ whose components are the generalised decode isomorphisms of Definition \ref{gencodedecode}:
\[ \xymatrix{
										& \F_S \ar[dr]^{\Phi}\ar@{}[d]|{commutes}	&				& \F_S 	\ar[rr]^{Id_{\F_S}} \ar[dr]_{\Phi}	&					& \F_S 	\\
 End(x) 	 \ar@{^(->}[ur]^{\iota}	\ar[rr]_{Id_{End(x)}}		&								& End(x)			& 								& End(x) \ar[ur]_{\iota} \ar@{=>}[u]_{ \code_{\_ }}	& 			\\						
%\ar@(ul,dl)[]|{Id} End(x) \ar/^12pt/[rr]^{\iota} 	&& \F_S \ar/^12pt/[ll]^{\Phi}	&& \F_S 	\ar[rr]^{Id_{\F_S}} \ar[dr]_{\Phi}	&					& \F_S 	\\
%									&&						&& 								& End(x) \ar[ur]_{\iota}	& 		
}
\]
\end{remark}

It is also almost immediate that a diagram over $\F_S$ commutes iff its image under $\Phi$ commutes; we prove this explicitly in order to illustrate how this relies on uniqueness of generalised code / decode arrows:
\begin{corol}\label{abovebelow}
Let $(S,\code)$ be a self-similar object of a semi-monoidal category $(\C,\otimes)$. %, and let $\Phi :\F_S\rightarrow End(x)$ be as given in Definition \ref{gencon}. 
Then a diagram $\mathfrak D$ over $\F_S$ commutes iff $\Phi (\mathfrak D)$ commutes.
\end{corol}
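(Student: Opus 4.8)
The plan is to deduce both implications directly from the two properties of $\Phi$ already established: that it is a functor, and that it is faithful. Throughout I would read a ``diagram $\mathfrak D$ over $\F_S$'' as a functor from a fixed index shape $J$ into $\F_S$, so that commutativity means that any two directed paths of $J$ sharing a source and a target are sent to equal composites in $\F_S$; applying $\Phi$ then postcomposes with $\Phi$ to give $\Phi(\mathfrak D):J\rightarrow End(x)$, whose commutativity is judged against the very same paths of $J$. Fixing the shape $J$ in this way is the crucial bookkeeping point, since $\Phi$ sends every object to the single object $x$, and one must not mistake collapsed objects in the target for coincidences of source and target in the shape.

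First I would dispatch the forward implication, which needs nothing beyond functoriality. If $\mathfrak D$ commutes, then for parallel paths $p,q$ of $J$ we have $\mathfrak D(p)=\mathfrak D(q)\in\F_S(A,B)$ for the appropriate $A,B$, and applying $\Phi$ gives $\Phi\mathfrak D(p)=\Phi\mathfrak D(q)$; hence $\Phi(\mathfrak D)$ commutes.

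For the converse I would use faithfulness, but unwind it explicitly so as to display the role of the generalised code/decode arrows, as the statement advertises. For a composite $f_n\cdots f_1$ along a path from $A$ to $B$ in $\F_S$, functoriality together with the splitting identities $\decode_u\code_u=1_u$ (Remark \ref{gencode_as_splittings}) makes the intermediate factors telescope, giving $\Phi(f_n\cdots f_1)=\code_B\,(f_n\cdots f_1)\,\decode_A$. Hence for parallel paths $p,q$ of $J$ with common source $A$ and target $B$, commutativity of $\Phi(\mathfrak D)$ reads $\code_B\,\mathfrak D(p)\,\decode_A=\code_B\,\mathfrak D(q)\,\decode_A$; since $\code_B$ and $\decode_A$ are isomorphisms this cancels to $\mathfrak D(p)=\mathfrak D(q)$, so $\mathfrak D$ commutes. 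Abstractly this last cancellation is precisely the faithfulness of $\Phi$, but presenting it via the telescoping $\decode_u\code_u=1_u$ exhibits how the argument rests on the uniqueness (equivalently, the splitting property) of the generalised code/decode arrows.

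The only genuine obstacle here is conceptual rather than computational: keeping the index shape $J$ separate from the collapsed target $End(x)$, so that ``commutes'' is always tested against paths of $J$ and never against the spurious parallelism created by every object mapping to $x$. Once that distinction is pinned down, the forward direction is pure functoriality and the backward direction is the cancellation above, and no further work is required.
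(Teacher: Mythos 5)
Your proposal is correct and is essentially the paper's own argument: the forward direction is pure functoriality, and the converse rests on conjugation by the invertible generalised code/decode arrows via $\Phi(f)=\code_B\,f\,\decode_A$ and $\decode_u\code_u=1_u$, with commutativity of $\Phi(\mathfrak D)$ rightly tested against the paths of the fixed shape rather than against the spurious parallelism created by collapsing all objects to $x$. The only difference is presentational --- the paper pastes the commuting squares linking each node of $\mathfrak D$ to its image in $\Phi(\mathfrak D)$ into a single augmented diagram (Figure \ref{augmented}), whereas you telescope exactly those squares along each pair of parallel paths and cancel the isomorphisms, which is the faithfulness of $\Phi$ made explicit in terms of the generalised code/decode arrows, precisely the point the paper's explicit proof is designed to illustrate.
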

\begin{proof}$ $ \\
$(\Rightarrow)$ This is a simple, well-known consequence of functoriality. 
\\
$(\Leftarrow)$
%Recall the formal description of diagrams given in Definition \ref{diagdef}.
Let $\mathfrak D$ be an arbitrary diagram over $(\F_S,\Box)$. Up to the obvious inclusion $\iota:End(x)\hookrightarrow \F_S$, $\mathfrak D$ and $\Phi(\mathfrak D)$ are diagrams in the same category; we treat their disjoint union $\mathfrak D \uplus  \Phi_(\mathfrak D)$ as a single diagram. 
\begin{figure}[h]
\caption{$\mathfrak D$ and $\Phi  (\mathfrak D)$ as a single diagram}\label{augmented}
\begin{center}
\scalebox{0.9}{$ \xymatrix{ 
\ar@{}[d]|<<<<<*+[F-,]{\mathfrak D}		&											&& v \ar[drr]^{g}	 			&&		&&																				&& v \ar[drr]^{g}  \ar@<0.8ex>[dd]|<<<<<<<<{\  \code_v}			&&																						\\
								&u \ar[rrrr]_<<<<<<<<<<{h} \ar[urr]^f					&&						&& w		&&		u \ar[rrrr]_<<<<<<<<<<{h} \ar[urr]^f \ar@<0.8ex>[dd]|<<<<<<<<{\  \code_u}				&&													&& w	 \ar@<0.8ex>[dd]|<<<<<<<<{\  \code_w}											\\
\ar@{}[d]|<<<<<*+[F-,]{\Phi(\mathfrak D)}	&											&& x	 \ar[drr]^{\Phi(g)}		&& 		&&																				&& x	 \ar[drr]^{\Phi(g)}\ar@<0.8ex>[uu]|<<<<<<<<{ \decode_v}		&& 																						\\
								&x  \ar[rrrr]_<<<<<<<<<<{\Phi(h)} \ar[urr]^{\Phi(f )}		&&						&&  x		&& x  \ar[rrrr]_<<<<<<<<<<{\Phi(h)} \ar[urr]^{\Phi(f )} \ar@<0.8ex>[uu]|<<<<<<<<{ \decode_u}	&&													&&  x \ar@<0.8ex>[uu]|<<<<<<<<{ \decode_w}	
}
$}
\end{center}\end{figure}
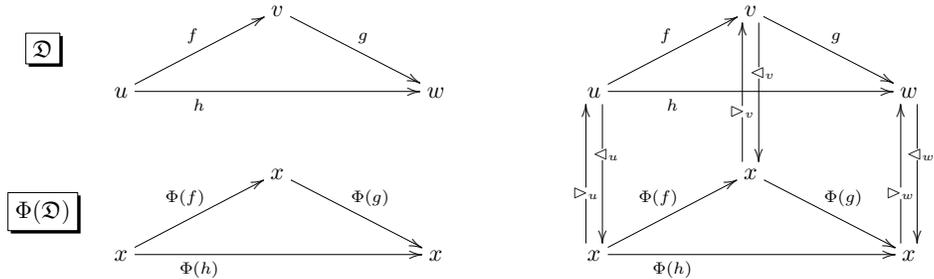
We then add edges to $\mathfrak D \uplus  \Phi(\mathfrak D)$ by linking each node $n$ with its image using the unique generalised code / decode arrows. This is illustrated in  Figure \ref{augmented}. 

Each additional polygon added to $\mathfrak D \uplus \Phi (\mathfrak D)$ commutes by definition of $\Phi$.  Thus the entire diagram commutes iff  $\mathfrak D$ commutes iff $\Phi (\mathfrak D)$ commutes. 
\end{proof}

\subsection{Semi-monoidal tensors on monoids}
We now exhibit a semi-monoidal tensor on the endomorphism monoid of a self-similar object such that the equivalence of Corollary \ref{equiv_cats} becomes a semi-monoidal equivalence.
\begin{lemma}\label{technical}
Let $(S,\code)$ be a self-similar structure of a semi-monoidal category % $(\C,\otimes ,   \tau_{\_,\_,\_})$, 
and let $(\F_S,\Box,t_{\_,\_,\_})$ be the semi-monoidal category freely generated by $S$. Then, %, and let the functor $\Phi:\F_S\rightarrow \F_S$ be as given in Definition \ref{gencon}. 
up to the inclusion $End(x)\hookrightarrow \F_S$,
\begin{enumerate}
\item $\Phi (f\Box  g) = \Phi \left( \Phi (f) \Box  \Phi (g) \right)$.
\item $\Phi(t_{u,v,w})=\Phi (t_{x,x,x})$, for all $u,v,w\in Ob(\F_S)$.
\end{enumerate}%YG02 BNF
\end{lemma}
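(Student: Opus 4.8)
The plan is to reduce both parts to the inductive description of the generalised code and decode arrows from Remark \ref{inductive_def} --- namely $\code_{u\Box v}=\code(\code_u\Box\code_v)$ and, dually, $\decode_{u\Box v}=(\decode_u\Box\decode_v)\decode$ --- together with the bifunctoriality of $\Box$ and the elementary identities $\code_u\decode_u=1_x$ and $\code_x=\decode_x=1_x$. For part (2) I additionally use the naturality of the associativity isomorphism $t_{u,v,w}$, which $(\F_S,\Box)$ inherits from $\tau$ in $(\C,\otimes)$.

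For part (1), take $f\in\F_S(u,v)$ and $g\in\F_S(p,q)$. Since $\Phi(f),\Phi(g)\in End(x)=\F_S(x,x)$, their tensor lies in $\F_S(x\Box x,x\Box x)$, and as $\code_{x\Box x}=\code$ and $\decode_{x\Box x}=\decode$ we have $\Phi(\Phi(f)\Box\Phi(g))=\code\,(\Phi(f)\Box\Phi(g))\,\decode$. Substituting $\Phi(f)=\code_v f\decode_u$ and $\Phi(g)=\code_q g\decode_p$ and using bifunctoriality to write $\Phi(f)\Box\Phi(g)=(\code_v\Box\code_q)(f\Box g)(\decode_u\Box\decode_p)$, the inductive formulas collapse $\code\,(\code_v\Box\code_q)$ to $\code_{v\Box q}$ and $(\decode_u\Box\decode_p)\,\decode$ to $\decode_{u\Box p}$. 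What remains is exactly $\code_{v\Box q}(f\Box g)\decode_{u\Box p}=\Phi(f\Box g)$, the left-hand side.

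For part (2) I would first apply naturality of $t$ to the code arrows $\code_u,\code_v,\code_w$, giving $t_{x,x,x}(\code_u\Box(\code_v\Box\code_w))=((\code_u\Box\code_v)\Box\code_w)\,t_{u,v,w}$, and hence $t_{u,v,w}=((\decode_u\Box\decode_v)\Box\decode_w)\,t_{x,x,x}\,(\code_u\Box(\code_v\Box\code_w))$. Substituting this into $\Phi(t_{u,v,w})=\code_{(u\Box v)\Box w}\,t_{u,v,w}\,\decode_{u\Box(v\Box w)}$ and repeatedly applying bifunctoriality together with $\code_u\decode_u=1_x$, every factor that depends on $u,v,w$ cancels: the left block reduces to $\code(\code\Box 1_x)$ and the right block to $(1_x\Box\decode)\decode$, so that $\Phi(t_{u,v,w})=\code(\code\Box 1_x)\,t_{x,x,x}\,(1_x\Box\decode)\decode$, an expression independent of $u,v,w$. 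Specialising to $u=v=w=x$ --- where $\code_{(x\Box x)\Box x}=\code(\code\Box 1_x)$ and $\decode_{x\Box(x\Box x)}=(1_x\Box\decode)\decode$ by the same inductive formulas --- identifies this common value as $\Phi(t_{x,x,x})$.

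The routine manipulations are harmless; the one place demanding care is the cancellation in part (2), where the nested bracketings of the tensor force the code and decode factors to be peeled off in the correct order before $\code_u\decode_u=1_x$ can be applied. I expect this bookkeeping, rather than any conceptual difficulty, to be the main obstacle --- and it is precisely the computation that the later fully general coherence theorem of Section \ref{full_coherence} will subsume.
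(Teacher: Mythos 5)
Your proof is correct and takes essentially the same approach as the paper's: both parts reduce to the inductive decomposition $\code_{u\Box v}=\code_{x\Box x}(\code_u\Box\code_v)$ (and its dual for $\decode$) from Remark \ref{inductive_def}, bifunctoriality of $\Box$, and, for part (2), naturality of $t_{\_,\_,\_}$. The only difference is cosmetic --- you solve for $t_{u,v,w}$ by conjugation with the code/decode arrows before substituting, whereas the paper expands $\Phi(t_{u,v,w})$ first and applies naturality once inside the resulting expression; the same cancellations occur either way.
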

\begin{proof}
$ $ \\
\begin{enumerate}
\item Given $f\in \F_S(a,b)$ and $g\in \F_S(u,v)$, then $ \Phi  (f\Box  g) = \code_{b\Box  v}(f\Box  g)\decode_{a\Box  u} = $
\[  \code_{x\Box x}(\code_b\Box  \code_v)((f\Box  g) (\decode_a\Box  \decode_u)\decode_{x\Box x} 
	=  \code_{x\Box x} ( \code_bf\decode_a \Box  \code_v g \decode_u ) \decode_{x\Box x} \]
\[ = \code_{x\Box x} \left( \Phi  (f) \Box  \Phi  (g) \right) \decode_{x\Box x} = \Phi  \left( \Phi  (f) \Box  \Phi  (g) \right) \]
\item By definition, $\Phi (t_{u,v,w})= \code_{(u\Box  v)\Box  v} t_{u,v,w} \decode_{u\Box  (v\Box  w)}$, and by Remark \ref{inductive_def}, the following diagram commutes:
\[ \xymatrix{
x \ar[rr]^{\decode_{x\Box x}} 	\ar[d]|{\Phi (t_{u,v,w})}	&& x\Box  x \ar[rr]^{1_x\Box \decode_{x\Box x}}		&& x\Box  (x\Box  x) \ar[rr]^{\decode_u\Box  (\decode_v\Box  \decode_w)}	&&		u\Box  (v\Box  w) \ar[d]|{t_{u,v,w}} \\
x												&& x\Box  x\ar[ll]^{\code_{x\Box x}} 				&& (x\Box  x) \Box  x \ar[ll]^{\code_{x\Box x}\Box  1_x} 					&& (u\Box  v)\Box  w \ar[ll]^{(\code_u \Box  \code_v)\Box \code _w} 
}
\]
By naturality of $t_{\_,\_,\_}$, 
\[ \begin{array}{rcl} 
\Phi (t_{u,v,w}) & = & \code_{x\Box x}(\code_{x\Box x}\Box 1_x) 
((\code_u \Box \code_v) \Box \code_w) 
((\decode_u \Box \decode_v)  \Box \decode_w)
t_{x,x,x}
(1_x \Box \decode_{x\Box x})
\decode_{x\Box x}  \\
 &   =   &  \code_{x \Box x}(\code_{x\Box x} \Box 1_x) t_{x,x,x} (1_x \Box  \decode_{x\Box x} ) \decode_{x\Box x}  \\
  & =  &  \Phi (t_{x,x,x}) \\
\end{array} \]
%
%the following diagram commutes
%\[ \xymatrix{
%x \ar[rr]^{\decode_{x\Box x} }	\ar[d]|{\Phi (t_{u,v,w})}	&& x\Box  x \ar[rr]^{1_x\Box \decode_{x\Box x}}		&& x\Box  (x\Box  x) \ar[rr]^{t_{x,x,x}}				&&	(x\Box  x)\Box  x \ar[d]|{{(\decode_u \Box  \decode_v)\Box \decode _w}} \\
%x												&& x\Box  x\ar[ll]^{\code_{x\Box x}}				&& (x\Box  x) \Box  x \ar[ll]^{\code_{x\Box x}\Box  1_x}	&&	 (u\Box  v)\Box  w \ar[ll]^{(\code_u \Box  \code_v)\Box \code _w} 
%}
%\]
%However $((\code_u \Box  \code_v)\Box \code _w)( (\decode_u \Box  \decode_v)\Box \decode _w)= 1_x$,  so 
%\[ \xymatrix{
%x \ar[rr]^{\decode_{x\Box x}} 	\ar[d]|{\Phi (t_{u,v,w})}	&& x\Box  x \ar[rr]^{1_x\Box \decode_{x\Box x}}		&& x\Box  (x\Box  x) \ar[d]|{t_{x,x,x}} \\
%x												&& x\Box  x\ar[ll]^{\code_{x\Box x}} 				&& (x\Box  x) \Box  x \ar[ll]^{\code_{x\Box x}\Box  1_x} 
%}
%\]
%Thus, 
%$\Phi  (t_{u,v,w}) = \Phi  (t_{x,x,x})$, 
as required. 
\end{enumerate}
\end{proof}
Based on the above lemma, we give a semi-monoidal tensor on the endomorphism monoid of a self-similar object.
\begin{definition}\label{stardef}
Let $(S,\code)$ be a self-similar object of a semi-monoidal category. %$(\C,\otimes, \tau_{\_,\_,\_})$. 
We define the {\bf semi-monoidal tensor induced by $(S,\code)$} to be the monoid homomorphism
$\_ \star_\code \_ : End(x)\times End(x)\rightarrow End(x)$ given by 
\[ f \star_\code g \ \stackrel{def.}{=} \ \Phi(f\Box g) \ =\  \code_{x\Box x} (f\Box g)\decode_{x\Box x}  \]
When the self-similar structure is clear from the context, we elide the subscript, and write $\_ \star \_ : End(x)\times End(x)\rightarrow End(x)$
\end{definition}

\begin{theorem}\label{untyped}
The operation $\_ \star \_$ defined above is a semi-monoidal tensor, and thus $(End(x), \_ \star \_)$ is a semi-monoidal monoid.
\end{theorem}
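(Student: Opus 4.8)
The plan is to obtain $(End(x),\star,\tau^\star)$ by transporting the semi-monoidal structure $(\F_S,\Box,t)$ across the equivalence $\Phi:\F_S\to End(x)$ of Corollary \ref{equiv_cats}, exploiting the fact that $\Phi\iota = Id_{End(x)}$ holds strictly (Remark \ref{nat_trans}). The two parts of Lemma \ref{technical} furnish the dictionary that translates $\Box$-expressions in $\F_S$ into $\star$-expressions in $End(x)$, and every condition required of $\star$ will be deduced by applying the functor $\Phi$ to the corresponding fact about $\Box$ and $t$.

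First I would dispatch functoriality. On the single object of $End(x)\times End(x)$ the operation $\star$ coincides with the composite $\Phi\circ(\_\Box\_)\circ(\iota\times\iota)$, since for $f,g\in End(x)$ we have $\iota f=f$, $\iota g=g$ and $\Phi(f\Box g)=f\star g$ by definition; as $(\_\Box\_)$, $\iota$ and $\Phi$ are all functors, so is $\star$. Concretely, $1_x\star 1_x=\Phi(1_{x\Box x})=1_x$, while functoriality of $\Box$ (the interchange law) and of $\Phi$ show that $(gf)\star(g'f')$ and $(g\star g')(f\star f')$ are both equal to $\Phi\big((g\Box g')(f\Box f')\big)$. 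Thus $\star$ is a monoid homomorphism $End(x)\times End(x)\to End(x)$, i.e.\ a bifunctor.

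Next I would take the associator to be the single arrow $\tau^\star:=\Phi(t_{x,x,x})\in End(x)$, which is invertible because functors preserve isomorphisms. Naturality is the only further requirement a one-object associator must meet beyond the pentagon, so I would apply $\Phi$ to the naturality square of $t$ at $f,g,h\in\F_S(x,x)$, namely $t_{x,x,x}\,(f\Box(g\Box h))=((f\Box g)\Box h)\,t_{x,x,x}$. Using $\Phi(f)=f$ for $f\in End(x)$ together with Lemma \ref{technical}(1), the images identify as $\Phi(f\Box(g\Box h))=f\star(g\star h)$ and $\Phi((f\Box g)\Box h)=(f\star g)\star h$, so the square becomes exactly $\tau^\star\,(f\star(g\star h))=((f\star g)\star h)\,\tau^\star$.

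Finally, the pentagon for $\star$ is produced by applying $\Phi$ to the pentagon for $t$ at the objects $x,x,x,x$, which commutes because $\F_S$ is semi-monoidal. By Lemma \ref{technical}(2) every component $\Phi(t_{u,v,w})$ collapses to the single arrow $\tau^\star$, and by Lemma \ref{technical}(1) the tensored associators reduce as $\Phi(t_{u,v,w}\Box 1_z)=\Phi(\tau^\star\Box 1_x)=\tau^\star\star 1_x$ and $\Phi(1_w\Box t_{u,v,w})=1_x\star\tau^\star$; the five-term composite then becomes MacLane's pentagon $(\tau^\star\star 1_x)\,\tau^\star\,(1_x\star\tau^\star)=\tau^\star\,\tau^\star$, which commutes since its $\F_S$-preimage does (Corollary \ref{abovebelow}). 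I expect the main obstacle to be precisely this last bookkeeping rather than any computation: a one-object category admits only a single associator component, so the whole argument hinges on Lemma \ref{technical}(2) forcing the entire object-indexed family $\{t_{u,v,w}\}$ to share a common image $\tau^\star$ under $\Phi$ --- without this collapse the pentagon terms would not coincide, and it is exactly this flattening (supplemented by Lemma \ref{technical}(1) for the interaction with $\Box$) that makes $\star$ genuinely semi-monoidal.
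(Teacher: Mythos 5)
Your proposal is correct and follows essentially the same route as the paper: functoriality of $\Phi$ yields the interchange law, the associator is defined as $\alpha=\Phi(t_{x,x,x})$, and naturality plus the pentagon are obtained by applying $\Phi$ to the corresponding facts in $(\F_S,\Box)$, with Lemma \ref{technical}(2) collapsing all components $\Phi(t_{u,v,w})$ to $\alpha$ and Lemma \ref{technical}(1) handling the tensored terms $\alpha\star 1$ and $1\star\alpha$. Your version merely spells out the bookkeeping the paper leaves implicit (and your appeal to Corollary \ref{abovebelow} is slightly stronger than needed, since only the easy functoriality direction is used), so there is no substantive difference.
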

\begin{proof}
Functoriality of $\Phi$ gives $1\star 1 =\Phi(1_{x}\Box 1_{x})=\Phi(1_{x\Box x})=1$, and  
\[ (f\star g)(h\star k) = \Phi(f\Box g)\Phi(h\Box k)=\Phi((f\Box g)(h\Box k))=\Phi(fh\Box gk)=fh\star gk \]
Let us define $\alpha=\Phi(t_{x,x,x})$. From Part 2. of Lemma \ref{technical}, $\Phi(t_{u,v,w})=\alpha$, for arbitrary $u,v,w\in Ob(\F_S)$. From Part 1. of Lemma \ref{technical}, and  Corollary \ref{abovebelow}, 
\[ \alpha (f\star(g\star h)) = ((f\star g)\star h)\alpha \ \ \forall f,g,h\in End(x) \]
and $\alpha^2 = (\alpha \star 1)\alpha (1\star \alpha)$. Thus both naturality and MacLane's pentagon condition are also satisfied.
\end{proof}

\begin{corol}\label{monoidal_phi}
The functor $\Phi:\F_S\rightarrow End(x)$ satisfies $\Phi(f\Box g )= \Phi(f) \star \Phi(g)$, and thus is a strict semi-monoidal functor $\Phi:(\F_S,\Box)\rightarrow (\C_S,\otimes)$.
\end{corol}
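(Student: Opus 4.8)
The plan is to read the identity straight off the definition of the induced tensor together with Part~1 of Lemma~\ref{technical}, since essentially all of the computation has already been done there. First I would unfold Definition~\ref{stardef}: for any two endomorphisms $p,q\in End(x)$ the tensor is defined by $p\star q = \Phi(p\Box q)$, where the formal tensor $p\Box q$ is formed in $\F_S$ via the inclusion $\iota:End(x)\hookrightarrow \F_S$ of Remark~\ref{nat_trans}. Instantiating this with $p=\Phi(f)$ and $q=\Phi(g)$ gives $\Phi(f)\star \Phi(g) = \Phi\!\left(\Phi(f)\Box \Phi(g)\right)$.

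Next I would invoke Part~1 of Lemma~\ref{technical}, which states precisely that $\Phi(f\Box g) = \Phi\!\left(\Phi(f)\Box \Phi(g)\right)$ (again up to the inclusion $End(x)\hookrightarrow \F_S$). Chaining the two equalities yields $\Phi(f\Box g)=\Phi(f)\star \Phi(g)$, i.e. strict preservation of the tensor. For the second assertion I would combine this with the fact, established in the Proposition preceding Corollary~\ref{equiv_cats}, that $\Phi$ is a functor; by the definition of a (strict) semi-monoidal functor given in Section~\ref{intro}, a functor strictly preserving the tensor is strict semi-monoidal, with target the semi-monoidal monoid $(End(x),\star)$ of Theorem~\ref{untyped} (identified with $(\C_S,\otimes)$ along the equivalence $Inst$). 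As a consistency check I would also note, using Part~2 of Lemma~\ref{technical}, that $\Phi$ sends each associator $t_{u,v,w}$ of $\F_S$ to $\alpha=\Phi(t_{x,x,x})$, the associator fixed for $\star$ in Theorem~\ref{untyped}, so the full semi-monoidal data is carried across on the nose rather than merely up to isomorphism.

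I expect no genuine obstacle: the entire weight of the argument sits inside Lemma~\ref{technical}, and this corollary is a one-line consequence of its Part~1 and Definition~\ref{stardef}. The only point requiring care is the implicit use of the inclusion $\iota:End(x)\hookrightarrow \F_S$ needed to make sense of the formal tensor $\Phi(f)\Box \Phi(g)$ as an arrow of $\F_S$; once this identification is made explicit, both equalities in the chain are either definitional or directly cited, and the bracketing matches exactly, so nothing further is needed.
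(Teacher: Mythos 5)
Your proposal is correct and follows essentially the same route as the paper, whose proof is exactly the one-line appeal to Theorem \ref{untyped} and Part~1 of Lemma \ref{technical} that you spell out: unfolding Definition \ref{stardef} to get $\Phi(f)\star\Phi(g)=\Phi\left(\Phi(f)\Box\Phi(g)\right)$ and chaining with $\Phi(f\Box g)=\Phi\left(\Phi(f)\Box\Phi(g)\right)$. Your explicit attention to the inclusion $\iota:End(x)\hookrightarrow\F_S$ and the consistency check on the associator via Part~2 of Lemma \ref{technical} are sound elaborations of details the paper leaves implicit.
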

\begin{proof}
This follows from Theorem \ref{untyped}  and Theorem \ref{technical}.
\end{proof}
At a given self-similar object $S\in Ob (\C)$, the semi-monoidal tensor is determined by the choice of isomorphism $\code\in \C(S\otimes S,S)$; however, these are related by conjugation in the obvious way, and thus $\_ \star \_$ is unique up to unique isomorphism.

\begin{proposition}\label{Drelated}
Let $(S,c)$ and $(S,\code)$ be self-similar structures at a given self-similar object. Then $f \star_{\code} g = \code c^{-1} (f\star_c g) c \decode$ for all $f,g\in End(x)$.
\end{proposition}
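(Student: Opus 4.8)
The plan is to unfold both induced tensors into explicit conjugation form and then cancel. First I would record the closed form of each tensor. Applying Definition \ref{stardef} to the structure $(S,\code)$, and using the base case $\code_{x\Box x}=\code$ (hence $\decode_{x\Box x}=\decode=\code^{-1}$) of the inductive description in Remark \ref{inductive_def}, gives $f\star_\code g=\code\,(f\Box g)\,\decode$; applying it instead to the structure $(S,c)$ gives $f\star_c g=c\,(f\Box g)\,c^{-1}$. Here $f,g\in End(x)=\C(S,S)$, and $f\Box g$ is literally the arrow $f\otimes g\colon S\otimes S\to S\otimes S$ in $\C$. Thus
\[ f\star_\code g=\code\,(f\Box g)\,\decode \qquad\text{and}\qquad f\star_c g=c\,(f\Box g)\,c^{-1}. \]

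Next I would substitute the second closed form into the right-hand side of the claimed identity and cancel, using the right-to-left composition convention fixed in Definition \ref{gencon}:
\[ \code\,c^{-1}\,(f\star_c g)\,c\,\decode \;=\; \code\,c^{-1}\,c\,(f\Box g)\,c^{-1}\,c\,\decode. \]
Each interior composite $c^{-1}c$ equals $1_{S\otimes S}$, so both cancel and we are left with $\code\,(f\Box g)\,\decode=f\star_\code g$, which is exactly the assertion.

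Finally I would note the conceptual content, since the calculation itself is routine and the only points requiring care are the composition order and the identification $f\Box g=f\otimes g$ inside $\C$ — there is no substantive obstacle beyond making those two conventions explicit. Setting $U=\code c^{-1}\in\C(S,S)$, we have $U^{-1}=c\code^{-1}=c\decode$, so the right-hand side is the conjugate $U\,(f\star_c g)\,U^{-1}$; and $U$ is precisely the unique isomorphism $S\to S$ relating $(S,c)$ and $(S,\code)$ produced by Corollary \ref{unique}. Hence the proposition records that altering the code isomorphism merely conjugates the induced semi-monoidal tensor by this canonical isomorphism, making precise the ``unique up to unique isomorphism'' dependence of $\_\star\_$ on the choice of $\code$ asserted just before the statement.
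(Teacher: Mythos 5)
Your proof is correct and matches the paper's own argument, which simply states that the result ``follows by direct calculation on Definition \ref{stardef}'' --- your unfolding of $f\star_\code g=\code\,(f\Box g)\,\decode$ and $f\star_c g=c\,(f\Box g)\,c^{-1}$ followed by cancellation of $c^{-1}c$ is exactly that calculation made explicit. Your closing observation identifying $U=\code\,c^{-1}$ with the unique isomorphism of Corollary \ref{unique} is also precisely the paper's stated alternative ``more structural'' proof, so you have in fact supplied both of its routes.
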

\begin{proof}
This follows by direct calculation on Definition \ref{stardef}; alternatively, and more structurally, it follows from the uniqueness up to unique isomorphism of idempotent splittings, and hence self-similar structures (Corollary \ref{unique}). \end{proof}

\begin{remark}
The above Proposition does not imply that all semi-monoidal tensors on a given monoid are related by conjugation. As a counterexample, the monoid of functions on $\mathbb N$ has distinct semi-monoidal tensors, arising from the fact that it is a self-similar object in both $(Fun, \times)$ and $(Fun,\uplus)$, that are clearly not related in this way (the relationship between the two is non-trivial and a key part of Girard's Geometry of Interaction program \cite{GOI,GOI2}, the details of which are beyond the scope of this paper).
\end{remark}

From Theorem \ref{no_sim_strict}, $(End(x), \_ \star \_)$ can only be strictly associative when $x$ is the unit object for $\_ \star \_$. When $(S,\code)$ is a self-similar structure of a strictly associative semi-monoidal category (e.g. the rings isomorphic to their matrix rings characterised in \cite{HL}), the associativity isomorphism for $\_ \star \_$ has the following neat form:

\begin{proposition}\label{not_strict}
Let $(S,\code)$ be a self-similar object of a strictly associative semi-monoidal category $(C,\otimes)$. Then the associativity isomorphism for $(End(x),\star)$ is given by $\alpha=\code(\code\otimes 1_S)(1_S\otimes \decode)\decode\in End(x)\cong \C(S,S)$.
\end{proposition}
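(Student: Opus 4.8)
The plan is to reduce everything to the formula $\alpha = \Phi(t_{x,x,x})$ established in the proof of Theorem \ref{untyped}, and then to evaluate this expression explicitly using strict associativity together with the inductive description of the generalised code / decode arrows from Remark \ref{inductive_def}. First I would observe that since $(\C,\otimes)$ is strictly associative and $Inst:(\F_S,\Box)\to(\C,\otimes)$ is a strict semi-monoidal functor, the associativity isomorphism $t_{x,x,x}$ of $\F_S$ instantiates to $\tau_{S,S,S}=1_{S\otimes S\otimes S}$; as $Inst$ is the identity on hom-sets, this means $t_{x,x,x}$ is itself the identity arrow. Hence the defining factor $t_{x,x,x}$ drops out of $\Phi(t_{x,x,x})$.

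Next I would expand $\alpha = \Phi(t_{x,x,x})$ using the definition of the generalised convolution functor (Definition \ref{gencon}), namely $\Phi(f)=\code_B f \decode_A$ for $f\in\F_S(A,B)$. With $A = x\Box(x\Box x)$ and $B=(x\Box x)\Box x$, and with $t_{x,x,x}$ now the identity, this collapses to $\alpha = \code_{(x\Box x)\Box x}\,\decode_{x\Box(x\Box x)}$.

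Finally I would compute the two generalised arrows appearing here from the inductive clauses $\code_{u\Box v}=\code_{x\Box x}(\code_u\Box\code_v)$, $\code_x=1_S$, $\code_{x\Box x}=\code$ of Remark \ref{inductive_def}, together with the dual clauses $\decode_{u\Box v}=(\decode_u\Box\decode_v)\decode_{x\Box x}$ for the decode arrows. This gives $\code_{(x\Box x)\Box x}=\code(\code\otimes 1_S)$ (taking $u=x\Box x$, $v=x$) and $\decode_{x\Box(x\Box x)}=(1_S\otimes\decode)\decode$ (taking $u=x$, $v=x\Box x$), under the identification $\Box=\otimes$, $x=S$ supplied by instantiation. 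Composing these yields $\alpha = \code(\code\otimes 1_S)(1_S\otimes\decode)\decode$, as claimed.

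The main obstacle I anticipate is the first step: one must justify cleanly that the formal associator $t_{x,x,x}$ of the freely generated category $\F_S$ really does become an identity, which rests on the interplay between the (trivial) associator $\tau$ of the strictly associative $\C$ and the fact that $Inst$ both preserves the tensor strictly and is the identity on hom-sets, so that $End(x)\cong\C(S,S)$ and the generalised arrows $\code_{x\Box x}$, $\decode_{x\Box x}$ are genuinely $\code$, $\decode$. Everything after this point is a direct, if slightly fiddly, unwinding of the recursive definitions, and the only thing to watch is the distinction between the bracketings $(x\Box x)\Box x$ and $x\Box(x\Box x)$, which fixes the order of the tensor factors in $\code\otimes 1_S$ and in $1_S\otimes\decode$ respectively.
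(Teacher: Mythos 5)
Your proposal is correct and takes essentially the same route as the paper: the paper's proof is exactly the ``direct calculation on Part 2 of Lemma \ref{technical}'' that you carry out, i.e.\ expanding $\alpha=\Phi(t_{x,x,x})=\code_{(x\Box x)\Box x}\,t_{x,x,x}\,\decode_{x\Box (x\Box x)}$, cancelling $t_{x,x,x}$ by strict associativity of $(\C,\otimes)$, and unwinding the generalised code/decode arrows via the inductive clauses of Remark \ref{inductive_def} to obtain $\code(\code\otimes 1_S)(1_S\otimes\decode)\decode$. Your one point of care is well placed: $t_{x,x,x}$ is an identity only in the hom-set sense (its formal domain $x\Box(x\Box x)$ and codomain $(x\Box x)\Box x$ in $\F_S$ are distinct objects instantiating to the same object of $\C$), and you handle this correctly via $Inst$ being the identity on hom-sets.
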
 
\begin{proof}
This follows by direct calculation on Part 2. of Lemma \ref{technical}.
\end{proof}

\subsection{The strictly self-similar form of a monogenic category}
The following is now immediate:
\begin{theorem}\label{3equiv} Let $(S,\code)$ be a self-similar structure of a semi-monoidal category $(\C,\otimes)$. Then $(\C_S,\otimes)$, $(\F_S,\Box)$ and $(End(x),\star)$ are semi-monoidally equivalent.
\end{theorem}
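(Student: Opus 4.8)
The plan is to assemble the two semi-monoidal equivalences that have already been constructed and to appeal to transitivity of semi-monoidal equivalence. The two bridges are the instantiation functor $Inst_S$ and the generalised convolution functor $\Phi$, both of which have $(\F_S,\Box)$ as their domain; $(\F_S,\Box)$ therefore serves as the common pivot, so it suffices to establish each of $(\F_S,\Box)\simeq (\C_S,\otimes)$ and $(\F_S,\Box)\simeq (End(x),\star)$ as a semi-monoidal equivalence and then compose.

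First I would recall that $Inst_S : (\F_S,\Box)\rightarrow (\C_S,\otimes)$ is, by Definition \ref{ins_def}, an epic strict semi-monoidal functor that is already a semi-monoidal equivalence of categories; this disposes of the first equivalence with no further work. Next I would recall that $\Phi:(\F_S,\Box)\rightarrow (End(x),\star)$ is a strict semi-monoidal functor (Corollary \ref{monoidal_phi}) which is an equivalence of the underlying categories (Corollary \ref{equiv_cats}). It then remains only to promote $\Phi$ to a genuine semi-monoidal equivalence, and finally to note that composites of semi-monoidal equivalences are semi-monoidal equivalences, so that all three categories are pairwise semi-monoidally equivalent.

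For the promotion step I would use the data already exhibited in Remark \ref{nat_trans}: the inclusion $\iota : End(x)\hookrightarrow \F_S$ satisfies $\Phi\iota = Id_{End(x)}$, and there is a natural isomorphism $\iota\Phi \Rightarrow Id_{\F_S}$ whose components are the generalised decode isomorphisms $\decode_{\_}$. Thus $\iota$ is a quasi-inverse to $\Phi$, and I would equip it with the (strong) semi-monoidal structure whose single coherence arrow $\iota(x)\Box\iota(x) = x\Box x \rightarrow x = \iota(x)$ is the code $\code = \code_{x\Box x}$. The verification that this makes $\iota$ a semi-monoidal functor, and that the unit and counit are \emph{monoidal} natural isomorphisms, reduces to the commutativity of a handful of diagrams over $\F_S$; each commutes by Corollary \ref{abovebelow} together with Lemma \ref{technical}, which already records both $\Phi(t_{u,v,w})=\Phi(t_{x,x,x})$ and the compatibility $\Phi(f\Box g) = \Phi(\Phi(f)\Box\Phi(g))$.

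I expect the only genuine obstacle to be this bookkeeping: upgrading $\Phi$ from ``strict semi-monoidal functor that happens to be an equivalence'' to ``semi-monoidal equivalence'', i.e. checking that the quasi-inverse $\iota$ really carries a compatible semi-monoidal structure and that $\decode_{\_}$ is monoidal. This is not deep, since the tensor $\star$ on the pivot was \emph{defined} through $\Phi$ in the first place and all the coherence needed has been pre-packaged in Lemma \ref{technical}; the residual care is simply that uniqueness of arrows in $\F_S$ (via Corollary \ref{abovebelow}) forces the relevant squares to commute.
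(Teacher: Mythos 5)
Your proposal is correct and takes essentially the same route as the paper: the paper's proof likewise obtains $(\F_S,\Box)\simeq(\C_S,\otimes)$ directly from the instantiation functor of Definition \ref{ins_def}, and $(\F_S,\Box)\simeq(End(x),\star)$ from Corollary \ref{monoidal_phi} (with Corollary \ref{equiv_cats}), composing through $(\F_S,\Box)$ as the pivot. The additional bookkeeping you carry out---equipping the quasi-inverse $\iota$ with the coherence arrow $\code$ and verifying that the components $\decode_{\_}$ form a monoidal natural isomorphism---is detail the paper leaves implicit rather than a difference in method, and your verification of it is sound.
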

\begin{proof}
The semi-monoidal equivalence between $(\F_S,\Box)$ and $(\C_S,\otimes)$ is given by the semi-monoidal functor of Definition \ref{ins_def}. From Corollary \ref{monoidal_phi}, the equivalence of categories between $\F_S$ and $End(x)$ gives a semi-monoidal equivalence between $(\F_S,\Box)$ and $(End(x),\star)$.
\end{proof}

\begin{corol}Every monogenic semi-monoidal category with a self-similar generating object is semi-monoidally equivalent to a semi-monoidal monoid. This justifies the description of $(End(x),\star)$ as the {\bf self-similarity strictification} of $(\C_S,\otimes)$. 
\end{corol}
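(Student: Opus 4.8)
The plan is to read both assertions off directly from Theorem \ref{3equiv} and Theorem \ref{untyped}; the only genuine work is to match the hypothesis to the machinery already in place. First I would unwind the phrase \emph{monogenic semi-monoidal category with a self-similar generating object}. Writing the category as $(\C,\otimes)$ and its single generating object as $S$, the fact that $\C$ is generated by $S$ identifies it with $(\C_S,\otimes)$ of Definition \ref{ins_def}, while the self-similarity of $S$ furnishes an isomorphism $S\cong S\otimes S$, i.e.\ a choice of code isomorphism $\code\in\C(S\otimes S,S)$ and hence a self-similar structure $(S,\code)$.

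The first claim is then immediate. Theorem \ref{3equiv} gives a semi-monoidal equivalence between $(\C_S,\otimes)$ and $(End(x),\star)$, and Theorem \ref{untyped} shows $(End(x),\star)$ is a semi-monoidal monoid; composing these yields the asserted semi-monoidal equivalence between the given monogenic category and a semi-monoidal monoid.

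For the terminological claim I would justify calling $(End(x),\star)$ a \emph{strictification} by exhibiting strict self-similarity in the target. Since $End(x)$ has a single object, that object coincides with its own $\star$-square, so the pair consisting of this object together with its identity arrow is a self-similar structure; that is, the single object is strictly self-similar. Thus the equivalence converts the non-strict self-similarity of $S$ in $(\C_S,\otimes)$ into strict self-similarity in an equivalent semi-monoidal monoid, precisely the behaviour the name is meant to convey. By Theorem \ref{no_sim_strict} this strictness of self-similarity is necessarily paid for by non-strict associativity, unless $S$ was already a unit object.

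There is essentially no computational obstacle here, since all the substance was discharged in Theorem \ref{3equiv}. The only points needing care are the bookkeeping identification of an abstractly presented monogenic category with $(\C_S,\otimes)$, and the observation that a one-object semi-monoidal category is automatically strictly self-similar --- both routine once the definitions are unwound.
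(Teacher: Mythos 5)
Your proposal is correct and matches the paper's treatment: the paper states this corollary without a separate proof, as an immediate consequence of Theorem \ref{3equiv} (with Theorem \ref{untyped} supplying that $(End(x),\star)$ is indeed a semi-monoidal monoid), which is exactly your argument. Your added observation --- that the unique object of a one-object semi-monoidal category is automatically strictly self-similar via $(x,1_x)$, so the equivalence trades non-strict self-similarity for strict self-similarity at the cost (by Theorem \ref{no_sim_strict}) of non-strict associativity --- is a correct and apt unpacking of why the paper's terminology ``self-similarity strictification'' is justified.
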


\begin{remark}
A general principle is that `categorical structures' are preserved by equivalences of categories. For example, if $\C_S$ is closed, then so is $End(x)$; this is used implicitly in \cite{LS} to construct single-object analogues of Cartesian closed categories, and in \cite{PH98,PH99} to construct single-object analogues of compact closure. Similarly, when $(C_S,\otimes)$ admits projections / injections, $(End(x),\star)$ contains a copy of Girard's dynamical algebra \cite{PH98,MVL,PH99} and under relatively light additional assumptions, admits a matrix calculus \cite{PH14a}. In general, we may find single-object (i.e. monoid) analogues of a range of categorical properties. %It also provides the justification for 
\end{remark}

We may now answer the question posed in Remark \ref{doesitfactor}.

\begin{corol}\label{inclusion_kills}
The diagram of Figure \ref{killer_inclusion} commutes precisely when the self-similar object in question is the unit object.
\end{corol}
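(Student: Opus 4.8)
The plan is to collapse the entire question to the strict associativity of the induced tensor $\_ \star \_$ on $End(x)$, and then invoke Theorem~\ref{no_sim_strict}. Working through the equivalence $Inst$ of Definition~\ref{ins_def} I would view the triangle inside $(\F_S,\Box,t_{\_,\_,\_})$: both $\W Sub$ and $\X Sub\circ\iota$ are semi-monoidal functors that agree on objects, and $\W$ is generated, under $\Box$ and composition, by its associativity isomorphisms $(u\Box v)\Box w\leftarrow u\Box(v\Box w)$. Hence the diagram commutes iff the two functors agree on every such generator. Under $\W Sub$ this generator maps to $t_{u,v,w}$, while Definition~\ref{Xsub} (every arrow of $\X$ factors through $x$) together with Definition~\ref{gencodedecode} sends it under $\X Sub\circ\iota$ to $\decode_{(u\Box v)\Box w}\code_{u\Box(v\Box w)}$. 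So commutativity is equivalent to $t_{u,v,w}=\decode_{(u\Box v)\Box w}\code_{u\Box(v\Box w)}$ for all $u,v,w$, i.e.\ to $\Phi(t_{u,v,w})=1_x$ for all $u,v,w$.

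Next I would invoke Lemma~\ref{technical}(2), which gives $\Phi(t_{u,v,w})=\Phi(t_{x,x,x})=\alpha$, the associativity isomorphism of $(End(x),\star)$ produced in Theorem~\ref{untyped}. Thus the diagram commutes iff $\alpha=1_x$, i.e.\ iff $\_ \star \_$ is strictly associative. By Theorem~\ref{no_sim_strict} this holds precisely when the unique object $x$ is the unit for $\_ \star \_$, and transporting along the semi-monoidal equivalence $(\C_S,\otimes)\simeq(End(x),\star)$ of Theorem~\ref{3equiv} (which, being semi-monoidal, preserves Saavedra-unithood, i.e.\ full faithfulness of $S\otimes\_$ and $\_\otimes S$) this is exactly the condition that $S$ be the unit object, proving the stated biconditional. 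As a consistency check I would unwind $\alpha=1_x$ at $(x,x,x)$ via Remark~\ref{inductive_def} (the general-associativity analogue of Proposition~\ref{not_strict}, $\alpha=\code(\code\otimes 1_S)\,\tau_{S,S,S}\,(1_S\otimes\decode)\decode$) to recover the rank-$3$ identity $\tau_{S,S,S}=(\decode\otimes 1_S)(1_S\otimes\code)$ foreseen in Remark~\ref{doesitfactor}, and observe that Theorem~\ref{SunitResults}(2) (idempotency $(\code\otimes 1_S)\tau_{S,S,S}=1_S\otimes\code$ of a Saavedra unit) shows this identity does hold at a unit.

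The only genuinely non-formal points are the two reductions in the first paragraph: that a semi-monoidal functor out of $\W$ is determined by its values on the associativity generators (standard, since these generate $\W$ under $\Box$ and composition), and that agreement on the generator is equivalent to $\Phi(t_{u,v,w})=1_x$ (immediate from the definition of $\Phi$ as conjugation by $\code_{\_}/\decode_{\_}$). The step I expect to do the real work is the propagation from the single equation at $(x,x,x)$ to all $(u,v,w)$; the key observation is that this is \emph{already supplied} by Lemma~\ref{technical}(2), so no separate MacLane-style naturality-and-substitution induction is required. Recognising that the whole factorisation question is nothing more than strictness of the induced tensor $\_ \star \_$ — after which Theorem~\ref{no_sim_strict} finishes it — is the conceptual crux of the argument.
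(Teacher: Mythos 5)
Your proposal is correct and follows essentially the same route as the paper: reduce commutativity of the triangle to strict associativity of the induced tensor $\star$ on $End(x)$, apply Theorem~\ref{no_sim_strict}, and transport the unit property back to $S$ along the semi-monoidal equivalences of Theorem~\ref{3equiv}. Your generator-level analysis via Lemma~\ref{technical}(2) (showing commutativity is exactly $\alpha = 1_x$) and your closing ``consistency check'' via Theorem~\ref{SunitResults}(2) merely make explicit -- and in the latter case actually supply the converse direction of -- steps the paper's one-line proof compresses or leaves implicit, so they strengthen rather than alter the argument.
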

\begin{proof} 
In the self-similarity strictification of $(\C_S,\otimes)$, the self-similarity is exhibited by identity arrows. Commutativity of the diagram of Figure \ref{killer_inclusion} implies that $(End(x),\star)$ has a strictly associative semi-monoidal tensor, so by Theorem \ref{no_sim_strict}, the unique object of $End(x)$ is the unit object for $\_ \star \_$. The equivalences of Theorem \ref{3equiv} then imply that $S$ is the unit object for $(\C_S,\otimes)$.
\end{proof}

Self-similarity strictification also illustrates a close connection between the generalised convolution and instantiation functors; informally, generalised convolution is simply instantiation in an isomorphic category:
\begin{proposition}\label{Kfunctor}
Let $(S,\code)$ be a self-similar object of a semi-monoidal category $(\C,\otimes )$, and denote by $(\F_S,\Box)$ and $(\F_x,\Diamond)$ the semi-monoidal categories freely generated by $S$, and the unique object of $(End(x),\star)$, respectively. Then there exists a semi-monoidal isomorphism $K: (F_S,\Box)\rightarrow (\F_x,\Diamond)$ such that the following diagram of semi-monoidal categories commutes:
\[ \xymatrix{
(\F_S,\Box) \ar@/^8pt/[rr]|K \ar[dr]_{\Phi}	&			& (\F_x,\Diamond) \ar[dl]^{Inst} \ar@/^8pt/[ll]|{K^{-1}} \\
							& (End(x),\star)	&						\\
}
\] 
\end{proposition}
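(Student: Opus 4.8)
The plan is to observe that $K$ is forced by the requirement $Inst\circ K=\Phi$, and then to verify it is a semi-monoidal isomorphism using the results already established. First I would unwind $(\F_x,\Diamond)$. Since $(End(x),\star)$ is a single-object semi-monoidal category, its instantiation assignment collapses every tree to the unique object; consequently every hom-set $\F_x(u,v)$ equals $End(x)$, the functor $Inst:(\F_x,\Diamond)\to(End(x),\star)$ sends every object to the unique object and is the identity on hom-sets, while $\Diamond$ agrees with $\Box$ on objects (formal pairing of trees) and is $\star$ on arrows. Given this, $Inst\circ K=\Phi$ forces $K$ to be the identity on objects (both categories have object-set $Tree$) and, on $f\in\F_S(A,B)$, to satisfy $K(f)=Inst(K(f))=\Phi(f)=\code_B f\decode_A\in End(x)=\F_x(A,B)$.

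I would then check that this $K$ is an isomorphism of categories. Functoriality is inherited from $\Phi$, since $K$ agrees with $\Phi$ on arrows and composition in $\F_x$ is composition in $End(x)$. For invertibility I would exhibit $K^{-1}(h)=\decode_B h\code_A$ for $h\in\F_x(A,B)=End(x)$ and verify both round-trips collapse using $\code_u\decode_u=1_x$ and $\decode_u\code_u=1_u$ (Remark \ref{gencode_as_splittings}); this gives a bijection on each hom-set, and $K$ is already the identity, hence a bijection, on objects.

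Next I would verify that $K$ is a strict semi-monoidal isomorphism. Preservation of the tensor on arrows is immediate from Corollary \ref{monoidal_phi}: for $f\in\F_S(a,b)$, $g\in\F_S(p,q)$ we have $K(f\Box g)=\Phi(f\Box g)=\Phi(f)\star\Phi(g)=K(f)\Diamond K(g)$, since $\Diamond$ is $\star$ on arrows. The one genuinely substantive point is that $K$ must carry the associator of $(\F_S,\Box)$ to that of $(\F_x,\Diamond)$: the associator $t_{u,v,w}$ of $\F_S$ varies with the trees, whereas $\F_x$ inherits the constant associator $\alpha=\Phi(t_{x,x,x})$ of $(End(x),\star)$ (Theorem \ref{untyped}). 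These are reconciled precisely by Part 2 of Lemma \ref{technical}, which gives $K(t_{u,v,w})=\Phi(t_{u,v,w})=\Phi(t_{x,x,x})=\alpha=t^{\F_x}_{K(u),K(v),K(w)}$.

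Finally, commutativity of the diagram is then formal: $Inst\circ K=\Phi$ holds by the construction of $K$, and applying $\Phi$ to $K^{-1}(h)=\decode_B h\code_A$ returns $h=Inst(h)$, giving $\Phi\circ K^{-1}=Inst$. I expect the only real obstacle to be bookkeeping --- correctly identifying the hom-sets of $\F_x$ with $End(x)$ and its associators with the constant $\alpha$ --- after which the content reduces entirely to the already-proved Corollary \ref{monoidal_phi} and Lemma \ref{technical}.
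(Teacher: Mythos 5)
Your proposal is correct and takes essentially the same route as the paper: the paper defines exactly the same $K$ (identity on objects, $K(f)=\code_v f\decode_u\in \F_S(x,x)=\F_x(u,v)$) and then declares the inverse, strict tensor-preservation, and commutativity of the triangle to be immediate by expanding definitions. You have simply spelled out those ``immediate'' verifications --- including the worthwhile check, via Part 2 of Lemma \ref{technical}, that $K$ sends the varying associators $t_{u,v,w}$ of $(\F_S,\Box)$ to the constant associator $\alpha$ of $(\F_x,\Diamond)$ --- which the paper leaves implicit.
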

\begin{proof}
We define the semi-monoidal functor $K:(\F_S,\Box)\rightarrow (\F_x,\Diamond)$ as follows:
\begin{itemize}
\item The small categories $(\F_x,\Diamond)$ and $(\F_S,\Box)$ have the same underlying set of objects; we take $K$ to be the identity on objects.
\item Given $f\in \F_S(u,v)$ we define $K(f)=\code_v f \decode_u \in \F_S(x,x)=\F_x(u,v)$.
\end{itemize}
The inverse is immediate, as is the (strict) preservation of the semi-monoidal tensor.  The commutativity of the above diagram follows by expanding out the definitions of $\Phi$ and $Inst$.
\end{proof}

\section{General coherence for self-similarity}\label{full_coherence}
We now consider coherence in the general case. Let  us fix a a self-similar structure $(S,\code)$ of a semi-monoidal category $(\C,\otimes ,\tau_{\_,\_,\_})$. We will abuse notation slightly; based on the monoid isomorphism $End(x)\cong \C(S,S)$, we treat the semi-monoidal tensor $\_ \star_\code \_$ equally as an operation on $\C(S,S)=\F_S(x,x)$ and denote the (unique) associativity isomorphism for $\_\star \_$ as $\alpha\in \C(S,S)$.    

The question we address is the following:\\
 
 {\em 
 \noindent
 \hspace{6em}Given a diagram over $\mathcal C_S$ with arrows built inductively from  
 \[ \{ \ \_ \otimes \_ \ ,  \tau_{\_,\_,\_} \ ,\  \code \ , \  \_\star\_ \ ,\ \ \alpha \ ,\  ( \ )^{-1}  \ \}, \]  
 \hspace{6em}when may it be guaranteed to commute?\\  }

We first fix some terminology.
\begin{notation} Given a category $\C$ and a class  $\Gamma$ of operations and arrows of $\C$, we say that a diagram is {\bf canonical for $\Gamma$} when its edges are built inductively from members of $\Gamma$.  For example, in a semi-monoidal category $(\C,\_ \otimes \_ , \tau_{\_,\_,\_})$, a diagram canonical for $\{ \_ \otimes \_ , \tau_{\_,\_,\_} , (\ )^{-1} \}$ is a diagram canonical for associativity, as usually understood.
\end{notation}

The following demonstrates that a simple appeal to freeness is not sufficient:

\begin{proposition} 
%Let  $(S,\code)$ be a self-similar structure of a semi-monoidal category $(\C,\otimes ,\tau_{\_,\_,\_})$, and l
Let $(\F_S,\Box , t_{\_,\_,\_} )$ be the semi-monoidal category freely generated by $S$.  Then, over $\F_S$ 
\begin{enumerate}
\item All diagrams canonical for $\{ \_ \Box \_  , t_{\_,\_\_ ,\_ }  , (\ )^{-1} \}$ commute.
\item All diagrams canonical for $\{ \_ \Box \_  , \code_\_ , , (\ )^{-1}  \}$ commute.
\item All diagrams canonical for $\{ \_ \Box \_  , t_{\_,\_\_ ,\_ } ,  \code_\_ , , (\ )^{-1}  \}$ commute iff $S$ is a unit object for $(\C_S,\otimes)$.
\end{enumerate}
\end{proposition}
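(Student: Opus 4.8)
The plan is to handle the three parts with increasing delicacy, reserving the real work for the biconditional in part 3. For part 1, a diagram canonical for $\{\_\Box\_, t_{\_,\_,\_}, (\ )^{-1}\}$ over $\F_S$ is exactly the image under the monic functor $\W Sub:(\W,\Box)\to(\F_S,\Box)$ of Lemma \ref{monic-epic} of a diagram over $\W$; since $\W$ is posetal every such diagram commutes, and functoriality transports commutativity to $\F_S$. This is simply MacLane's associativity coherence (Section \ref{MCL_coherence}) read inside $\F_S$. Part 2 is identical in form, now using $\X Sub:(\X,\Box)\to(\F_S,\Box)$: the relevant diagrams are images of diagrams over the indiscrete --- hence posetal --- category $\X$, so they commute by Lemma \ref{selfsim-coherence}.

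For part 3 the essential tool is the generalised convolution functor $\Phi$, together with Corollary \ref{abovebelow}, which guarantees that a diagram $\mathfrak D$ over $\F_S$ commutes iff $\Phi(\mathfrak D)$ does. The key computation is that $\Phi$ trivialises both families of generators: since $\code_x=1_x$ and $\decode_u=\code_u^{-1}$ we get $\Phi(\code_u)=\code_x\code_u\decode_u=1_x$ for every $u$, while $\Phi(t_{u,v,w})=\alpha$ for every $u,v,w$ by Lemma \ref{technical}(2). Thus $\Phi$ carries every arrow canonical for $\Gamma=\{\_\Box\_,t_{\_,\_,\_},\code_\_,(\ )^{-1}\}$ to an arrow of $(End(x),\star)$ built from $1_x$, $\alpha$, $\star$ and inverses.

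For the direction ($\Leftarrow$), if $S$ (equivalently $x$) is the unit then $\star$ is strictly associative by Theorem \ref{no_sim_strict}, so $\alpha=1_x$; then every $\Gamma$-canonical arrow maps under $\Phi$ to $1_x$, so $\Phi(\mathfrak D)$ has only identity edges and commutes trivially, whence $\mathfrak D$ commutes. For ($\Rightarrow$) I would exhibit a single witness. Although $t_{x,x,x}$ runs between the \emph{distinct} trees $x\Box(x\Box x)$ and $(x\Box x)\Box x$ --- so nothing in $\F_S$ is parallel to it --- the code isomorphisms let us close it into a loop at $x$:
\[ x \xrightarrow{\decode_{x\Box(x\Box x)}} x\Box(x\Box x) \xrightarrow{t_{x,x,x}} (x\Box x)\Box x \xrightarrow{\code_{(x\Box x)\Box x}} x, \]
whose composite is precisely $\Phi(t_{x,x,x})=\alpha$. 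This loop is $\Gamma$-canonical, and setting it parallel to $1_x=\code_x$ yields a $\Gamma$-canonical bigon that commutes iff $\alpha=1_x$. If all $\Gamma$-canonical diagrams commute, this one does, forcing $\alpha=1_x$; Theorem \ref{no_sim_strict} then makes $x$ the unit for $\star$, and the semi-monoidal equivalence of Theorem \ref{3equiv} makes $S$ the unit for $(\C_S,\otimes)$.

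The hard part is part 3, and the conceptual crux is recognising that the code arrows are exactly what make associativity coherence non-trivial. In $\F_S$ alone the associator has distinct source and target, so parts 1 and 2 can never detect non-strictness; only the interaction --- using $\code$ and $\decode$ to return to a common object --- manufactures a bigon comparing $\alpha$ with the identity. The remaining care is to check, via $\Phi$, that once $\alpha=1_x$ \emph{no} mixed diagram can fail, which is what the collapse computation above secures. This argument also re-proves Corollary \ref{inclusion_kills} as the same phenomenon.
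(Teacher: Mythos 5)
Your proposal is correct, and for parts 1 and 2 it is the paper's own argument: both parts are read off from the monic substitution functors $\W Sub$ and $\X Sub$ out of the posetal categories $(\W,\Box)$ and $(\X,\Box)$ via Lemma \ref{monic-epic}. For part 3 your forward direction is the paper's proof with a cosmetically different witness: the paper uses the triangle comparing $\decode_{x\Box x}\Box 1_x$ with $t_{x,x,x}(1_x\Box \decode_{x\Box x})$ out of $x\Box x$, while you close $t_{x,x,x}$ into the loop $\code_{(x\Box x)\Box x}\,t_{x,x,x}\,\decode_{x\Box (x\Box x)}$ at $x$ and compare it with $1_x$; both witnesses are canonical for the given class, both have $\Phi$-image forcing $\alpha=1_x$, and the remaining route through Theorem \ref{no_sim_strict} and Theorem \ref{3equiv} is identical. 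Where you genuinely go beyond the paper is the converse implication: the paper's proof of part 3 only establishes that commutativity of all such diagrams forces $S$ to be a unit, leaving the other direction tacit, whereas your collapse argument --- every canonical generator has $\Phi$-image in the submonoid of $End(x)$ generated by $1_x$ and $\alpha$ (using $\Phi(\code_u)=1_x$, $\Phi(t_{u,v,w})=\alpha$, and $\Phi(f\Box g)=\Phi(f)\star\Phi(g)$), so $\alpha=1_x$ makes every edge of $\Phi(\mathfrak D)$ an identity, and Corollary \ref{abovebelow} pulls commutativity back to $\mathfrak D$ --- supplies it explicitly. That is a real improvement.

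One step deserves more care than you give it: you infer $\alpha=1_x$ from ``$x$ is the unit'' via the strict associativity provided by Theorem \ref{no_sim_strict}, but strict associativity of the binary operation $\star$ does not by itself pin the particular element $\alpha=\Phi(t_{x,x,x})$ to the identity, since a strictly associative tensor could a priori carry a non-identity associator satisfying naturality and the pentagon. The claim is nonetheless true and easily secured: since Saavedra units are MacLane/Kelly units (Theorem \ref{SunitResults}), the triangle axiom gives $(\code\otimes 1_S)\,\tau_{S,S,S}\,(1_S\otimes \decode)=1_{S\otimes S}$ when $\code$ is the canonical unit isomorphism, whence $\alpha=\code(\code\otimes 1_S)\tau_{S,S,S}(1_S\otimes\decode)\decode=1_S$; an arbitrary self-similar structure at the unit differs by an automorphism $U$ (Corollary \ref{unique}), and commutativity of the unit's endomorphism monoid shows the conjugated $\alpha$ is still the identity. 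With that line added, your proof is complete --- and, on the converse direction, strictly more explicit than the one in the paper.
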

\begin{proof}
1. is well-established; it follows from the monic-epic decomposition of MacLane's substitution functor described in Lemma \ref{monic-epic} and (in the monoidal case) is commonly used \cite{JS} to study coherence. 2. follows similarly from Lemma \ref{monic-epic}. For 3., the following diagram is canonical for self-similarity and associativity:
\[ \xymatrix{ 
				&	 \ar[dl] _{\decode_{x\Box x}\Box 1_x}   x\Box x 	 \ar[dr] ^{1_x\Box \decode_{x\Box x}}	&	\\
(x\Box x)\Box x		&																		& x\Box (x\Box x) \ar[ll]^{t_{x,x,x}} 
}
\]
Applying $\Phi:(F_S,\Box)\rightarrow (End(x),\star)$ to this diagram gives the associativity isomorphism for $(End(x),\star )$ as $\alpha=1_x$, so by Theorem \ref{no_sim_strict} the unique object of $End(x)$ is the unit object for $\_ \star \_$. Appealing to the semi-monoidal equivalences of Theorem \ref{3equiv} gives that $S\in Ob(\C_S)$ is the unit object for $\_ \otimes \_$.
\end{proof}

We now introduce an equivalence relation on diagrams over $\F_S$ that allows us to answer this question in the free case:

\begin{definition}
As $\mathcal F_S$ is a small category, we may treat a diagram $\mathfrak D$ over $\F_S$, with underlying directed graph $G=(V,E)$, as a pair of functions $\mathfrak D_V:V\rightarrow Ob (\F_S)$ and $\mathfrak D_E:E\rightarrow Arr(\F_S)$ satisfying
\[ \mathfrak D_E(e) \in \F_S(\mathfrak D_V(v),\mathfrak D_V(w)) \ \ \mbox{ for all edges } \  \ v\stackrel{e}{\longrightarrow} w \ \in E \]
We will omit the subscripts on $\mathfrak D_V$ and $\mathfrak D_E$  when the context is clear.

Given diagrams $\mathfrak T,\mathfrak U$ with underlying graphs $G=(V,E)$ and $G'=(V',E')$ respectively, we say they are {\bf self-similarity equivalent}, written $\mathfrak T \sim_\dc \mathfrak U$ when there exists a graph isomorphism $\eta:G\rightarrow G'$ such that, for all edges $s\stackrel{e}{\longrightarrow} t$ of $G$, the following diagram commutes:
\[ \xymatrix{
\mathfrak T(s) \ar[rr]^{\mathfrak T(e)} 		&& \mathfrak T(t) \\
\mathfrak U(\eta(s)) \ar[rr]_{\mathfrak U (\eta(e))} 	\ar[u]|{\decode_{\mathfrak T(s)}\code_{\mathfrak U(\eta(s))}}	&& \mathfrak U(\eta(t)) \ar[u]|{\decode_{\mathfrak T(t)}\code_{\mathfrak U(\eta(t))}} \\
}
\]
(An intuitive description is illustrated by example in Figure \ref{verticals}). 
This is an equivalence relation, since the object-indexed isomorphisms $\code_\_$ and $\decode_\_$ specify a wide posetal subcategory of $\F_S$. We denote the corresponding equivalence classes by $[\_ ]_\dc$.% of a diagram $\mathfrak D$ by $[\mathfrak D]_{\dc}$.
\end{definition} 

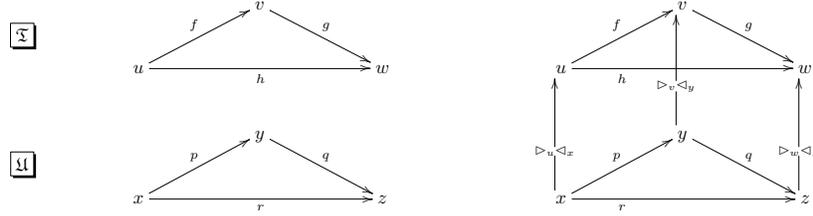
\begin{figure}
\caption{$\mathfrak T \sim_\dc \mathfrak U$ when all `vertical' squares commute in the rhs diagram}\label{verticals}
\begin{center}
\scalebox{0.7}{$ \xymatrix{ 
\ar@{}[d]|<<<<<*+[F-,]{\mathfrak T}	&& 					&& v	\ar[drr]^g	&&		&	&&																		&& v \ar[drr]^{g}  										&&																						\\
							&& u \ar[urr]^f	\ar[rrrr]_h	&&			&& w		&	&&		u \ar[rrrr]_<<<<<<<<<<{h} \ar[urr]^f 										&&													&& w	 											\\
\ar@{}[d]|<<<<<*+[F-,]{\mathfrak U}	&&					&& y	\ar[drr]^q	&&		&	&&																		&& y	 \ar[drr]^{q }\ar@<0.8ex>[uu]|<<<<<<<<{ \decode_v\code_y}	&& 																						\\
							&&x	\ar[urr]^p	\ar[rrrr]_r 	&&			&& z		&	&& x  \ar[rrrr]_<<<<<<<<<<{r} \ar[urr]^{p} \ar@<0.8ex>[uu]|<<<<<<<<{ \decode_u\code_x}	&&													&&  z \ar@<0.8ex>[uu]|<<<<<<<<{ \decode_w\code_z}	
}
$}
\end{center}
\end{figure}

%The following lemma connects this equivalence relation with commutativity:
\begin{lemma}\label{simcomm}
Given $\mathfrak T \sim_\dc \mathfrak U$ over $\F_S$, then $\mathfrak T$ commutes iff $\mathfrak U$ commutes.
\end{lemma}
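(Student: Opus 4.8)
The plan is to mirror the combined-diagram technique used in the proof of Corollary \ref{abovebelow}. For each vertex $s$ of the underlying graph $G$ of $\mathfrak T$, write $v_s = \decode_{\mathfrak T(s)}\code_{\mathfrak U(\eta(s))}\in \F_S(\mathfrak U(\eta(s)),\mathfrak T(s))$ for the vertical connecting arrow appearing in the definition of $\sim_\dc$. By Remark \ref{gencode_as_splittings} this is the unique generalised isomorphism from $\mathfrak U(\eta(s))$ to $\mathfrak T(s)$, and since the generalised code / decode arrows constitute a wide posetal subcategory of $\F_S$, the family $\{v_s\}$ is coherent: each $v_s$ is invertible, and any composite built from the $v_s$ and their inverses between a fixed pair of objects is independent of the path chosen. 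I would then glue $\mathfrak T$ and $\mathfrak U$ into a single diagram by adjoining the edges $v_s$, exactly as in Figure \ref{augmented}; the hypothesis $\mathfrak T \sim_\dc \mathfrak U$ says precisely that every resulting `vertical' square commutes.

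The key step is a telescoping computation. Commutativity of the square attached to an edge $s\stackrel{e}{\longrightarrow} t$ rewrites as $\mathfrak T(e) = v_t\, \mathfrak U(\eta(e))\, v_s^{-1}$. Composing this identity along any directed path $P = e_n \cdots e_1$ from a vertex $a$ to a vertex $b$ in $G$, the interior vertical isomorphisms cancel in consecutive pairs $v_s^{-1} v_s = 1$, leaving
\[ \mathfrak T(P) = v_b\, \mathfrak U(\eta(P))\, v_a^{-1}, \]
where $\eta(P)$ denotes the image path in $G'$. Thus conjugation by the boundary isomorphisms $v_a, v_b$ carries the value of a path in $\mathfrak T$ to the value of the corresponding path in $\mathfrak U$.

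Finally, because $v_a$ and $v_b$ are isomorphisms, for any two parallel paths $P_1, P_2$ in $G$ from $a$ to $b$ we have $\mathfrak T(P_1) = \mathfrak T(P_2)$ if and only if $\mathfrak U(\eta(P_1)) = \mathfrak U(\eta(P_2))$. Since $\eta$ is a graph isomorphism, as $(P_1,P_2)$ ranges over all parallel pairs of paths of $G$ the images $(\eta(P_1),\eta(P_2))$ range over all parallel pairs of $G'$; hence $\mathfrak T$ commutes if and only if $\mathfrak U$ does. The only real bookkeeping --- and the step I would be most careful about --- is the telescoping: one must check the orientation of each vertical square so that the conjugating isomorphisms genuinely cancel rather than accumulate, and handle paths that are not simple. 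No coherence hypothesis beyond the commuting vertical squares is needed, precisely because the $v_s$ are honest inverses.
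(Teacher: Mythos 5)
Your proof is correct and follows essentially the same route as the paper, which simply cites the combined-diagram technique of Corollary \ref{abovebelow}: you glue $\mathfrak T$ and $\mathfrak U$ along the vertical isomorphisms $v_s$ exactly as in Figure \ref{augmented} and make explicit the telescoping step the paper leaves implicit. The conjugation identity $\mathfrak T(P) = v_b\,\mathfrak U(\eta(P))\,v_a^{-1}$, together with $\eta$ being a graph isomorphism, is precisely the ``slight generalisation of the reasoning'' the paper's proof invokes.
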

\begin{proof}
This is immediate from the definition, and a slight generalisation of the reasoning in the proof of Theorem \ref{abovebelow}.
\end{proof}
We may now demonstrate commutativity for a class of diagrams in the free setting:
\begin{theorem}\label{formalcase}
Let $(S,\code)$ be a self-similar structure of a semi-monoidal category $(\C,\otimes)$, and let $\mathfrak D$ be a diagram over $(\F_S,\Box)$ canonical for $\{ \Box , t_{\_,\_,\_} , \code_\_ ,\star , \alpha , (\ )^{-1} \}$. The following two conditions are equivalent, and characterise a class of diagrams guaranteed to commute:
\begin{enumerate}
\item $[ \mathfrak D]_\dc$ contains a diagram canonical for $\{ \Box , t_{\_,\_,\_} , (\ )^{-1} \}$.
\item $\Phi(\mathfrak D)$, which is canonical for $\{ \star ,\alpha , (\ )^{-1} \}$, is guaranteed to commute by MacLane's coherence theorem for associativity.
\end{enumerate}
\end{theorem}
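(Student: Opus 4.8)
The plan is to reduce the whole statement to a single structural fact --- that the convolution functor $\Phi$ annihilates the generalised code/decode isomorphisms --- together with the observation that the relation $\sim_\dc$ is precisely equality of images under $\Phi$. First I would record the elementary computation that $\Phi(\code_u)=\Phi(\decode_u)=1_x$ for every $u\in Ob(\F_S)$: by Remark \ref{inductive_def} we have $\code_x=1_S$, so $\Phi(\code_u)=\code_x\code_u\decode_u=\code_u\decode_u=1_x$, and dually for $\decode_u$. Combined with Corollary \ref{monoidal_phi} ($\Phi(f\Box g)=\Phi(f)\star\Phi(g)$), part 2 of Lemma \ref{technical} ($\Phi(t_{u,v,w})=\alpha$), and the fact that $\Phi$ restricts to the identity on $End(x)$ (so that $\star$ and $\alpha$ are carried to themselves), this shows that applying $\Phi$ to $\mathfrak D$ sends each generator in $\{\Box,t_{\_,\_,\_},\code_\_,\star,\alpha,(\ )^{-1}\}$ to the corresponding generator of $\{\star,\alpha,(\ )^{-1}\}$, with the code/decode arrows collapsing to identities. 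This justifies the parenthetical claim that $\Phi(\mathfrak D)$ is canonical for associativity in $(End(x),\star)$.

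The key lemma I would then establish is that $\mathfrak T\sim_\dc\mathfrak U$ holds if and only if $\Phi(\mathfrak T)=\Phi(\mathfrak U)$ as diagrams over $End(x)$ (transported along the graph isomorphism $\eta$). This follows by applying $\Phi$ to each `vertical' square in the definition of $\sim_\dc$: since $\Phi$ sends the vertical arrows $\decode_{\mathfrak T(s)}\code_{\mathfrak U(\eta(s))}$ to identities, commutativity of each square becomes exactly the equation $\Phi(\mathfrak T(e))=\Phi(\mathfrak U(\eta(e)))$. Thus self-similarity equivalence is precisely `same image under $\Phi$', which is the bridge between the $\sim_\dc$-formulation of condition $(1)$ and the $\Phi$-formulation of condition $(2)$.

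With these two facts in hand the equivalence of the conditions is almost formal. For $(1)\Rightarrow(2)$, if $[\mathfrak D]_\dc$ contains a diagram $\mathfrak A$ canonical for $\{\Box,t_{\_,\_,\_},(\ )^{-1}\}$, then by the key lemma $\Phi(\mathfrak A)=\Phi(\mathfrak D)$; but $\mathfrak A$ is a genuine associativity diagram over the free category $\F_S$, which by the monic--epic decomposition of Lemma \ref{monic-epic} descends to a diagram over the posetal $\W$, so its semi-monoidal image $\Phi(\mathfrak A)=\Phi(\mathfrak D)$ is exactly a diagram certified by MacLane's coherence theorem. For $(2)\Rightarrow(1)$, the content of MacLane's theorem is precisely that a coherent associativity diagram over the one-object category $(End(x),\star)$ arises, after a consistent decoration of its nodes by trees (equivalently, by ranks), as the image of an associativity diagram over $\F_S$; I would take this lift to be $\mathfrak A$, so that $\Phi(\mathfrak A)=\Phi(\mathfrak D)$ and hence, by the key lemma, $\mathfrak A\sim_\dc\mathfrak D$, which is condition $(1)$. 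The commutativity claim then closes the argument: under either condition we have an associativity diagram $\mathfrak A\in[\mathfrak D]_\dc$, all associativity diagrams over $\F_S$ commute (Lemma \ref{monic-epic}, since they descend to the posetal $\W$), and $\mathfrak A\sim_\dc\mathfrak D$, so Lemma \ref{simcomm} yields that $\mathfrak D$ commutes.

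The main obstacle I anticipate is the $(2)\Rightarrow(1)$ direction: one must pin down exactly what `guaranteed to commute by MacLane's coherence theorem' means for a diagram in a \emph{one-object} category, where the rank/shape information that distinguishes formally distinct associativity isomorphisms has been collapsed. The correct reading is that such certification is the existence of a rank-decoration lifting $\Phi(\mathfrak D)$ to an associativity diagram over $\F_S$; unwinding this, and checking that the lift can be taken to be a genuine associativity diagram (rather than one commuting only up to code/decode), is where the real work lies. Corollary \ref{abovebelow} together with the key lemma is what makes the lift faithful, so that the passage between the monoid $(End(x),\star)$ and the free category $\F_S$ neither creates nor destroys commutativity.
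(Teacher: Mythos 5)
Your proposal is correct, and its outer skeleton matches the paper's: establish that $\Phi$ collapses the generalised code/decode arrows to $1_x$ and carries the generators $\{\Box, t_{\_,\_,\_}, \code_\_\}$ to $\{\star,\alpha,1_x\}$ (your computation $\Phi(\code_u)=\code_x\code_u\decode_u=1_x$ is exactly right), shuttle between conditions (1) and (2) through $\Phi$, and deliver commutativity via Lemma \ref{simcomm}. The genuine difference is in the middle layer. The paper pivots on Proposition \ref{Kfunctor}: the semi-monoidal isomorphism $K:(\F_S,\Box)\rightarrow(\F_x,\Diamond)$ with $\Phi=Inst\circ K$ gives a precise operational reading of ``certified by MacLane'' over the one-object category --- namely, existence of a diagram $\mathfrak P$ over $\F_x$ canonical for $\{\Diamond,t'_{\_,\_,\_},(\ )^{-1}\}$ with $Inst(\mathfrak P)=\Phi(\mathfrak D)$ --- and then $K(\mathfrak R)$, respectively $K^{-1}(\mathfrak P)$, performs the transport in each direction, with the needed $\sim_\dc$ facts (``up to isomorphism of the underlying graph, $\Phi(\mathfrak R)$ is identical to $\Phi(\mathfrak D)$''; ``by construction $K^{-1}(\mathfrak P)\sim_\dc\mathfrak D$'') asserted in situ. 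You instead isolate these as a single key lemma --- $\mathfrak T\sim_\dc\mathfrak U$ iff $\Phi(\mathfrak T)$ and $\Phi(\mathfrak U)$ agree up to graph isomorphism, proved by conjugating the vertical squares and using that $\Phi$ kills the connecting isomorphisms (faithfulness of $\Phi$ gives the converse) --- a statement the paper uses implicitly but never records, after which both implications are essentially formal. What your route buys: the equivalence of (1) and (2) reduces to ``$\sim_\dc$ is equality of $\Phi$-images,'' and the paper's admittedly redundant double proof of commutativity collapses to one application of Lemma \ref{simcomm}. What the paper's route buys: invoking $K$ turns the lifting step in $(2)\Rightarrow(1)$ --- precisely the point you flag as ``where the real work lies'' --- into a citation rather than an argument, since MacLane certification hands you $\mathfrak P$ over $\F_x$ and the semi-monoidal isomorphism $K^{-1}$ transports it to a genuine associativity diagram over $\F_S$ with the same $\Phi$-image. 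Your direct lift to $\F_S$ by tree-decoration is sound, but it in effect re-proves the compatibility $\Phi=Inst\circ K$ inline; citing Proposition \ref{Kfunctor} at that point would make the lift immediate and close the one spot where your write-up leans on an informal reading of MacLane's theorem in a one-object category.
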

\begin{proof}  
Somewhat redundantly, we give separate proofs that both these conditions characterise a class of commuting diagrams:
\begin{enumerate}
\item From the basic theory of coherence for associativity  (see Lemma \ref{monic-epic}), in $(\F_S,\Box)$, all diagrams canonical for $\{ \Box , t_{\_,\_,\_} , (\ )^{-1} \}$ are guaranteed to commute by MacLane's theorem; our result then follows from Lemma \ref{simcomm}. 
\item  By construction, $\Phi(\code_\_) = 1_x$ and $\Phi(f\Box g)=\Phi(f)\star \Phi(g)$ for all arrows $f,g$. Thus $\Phi(\mathfrak D)$ is indeed canonical for $\{ \star ,\alpha , (\ )^{-1} \}$. From Corollary \ref{abovebelow}, $\Phi(\mathfrak D)$ commutes iff $\mathfrak D$ commutes. Thus when MacLane's theorem predicts $\Phi(\mathfrak D)$ to commute, $\mathfrak D$ also commutes.
\end{enumerate}
We now show that they characterise the same class of commuting diagrams:\\
$(1.\Rightarrow 2.)$ Let $\mathfrak R \in [\mathfrak D]_\dc$ be canonical for $\{ \_ \Box \_ ,t, (\ )^{-1} \}$. As the functor $K:(\F_S,\Box, t_{\_,\_,\_})\rightarrow (\F_x,\Diamond,t'_{\_,\_,\_})$ of Proposition \ref{Kfunctor} is semi-monoidal, $K(\mathfrak R)$ is canonical for $\{ \Diamond, t'_{\_,\_,\_}, ( \ )^{-1} \}$ and thus $Inst (K(\mathfrak R)) = \Phi (\mathfrak R)$ is predicted to commute by MacLane's theorem. However, up to isomorphism of the underlying graph, $\Phi(\mathfrak R)$ is identical to $\Phi(\mathfrak D)$.
\\
$(2.\Rightarrow 1.)$ As $\Phi(\mathfrak D)$ is predicted to commute by MacLane's theorem, there exists some diagram $\mathfrak P$ over $(\F_x,\Diamond , t'_{\_,\_,\_})$ that is canonical for $\{ \Diamond , t'_{\_,\_,\_} , (\ )^{-1} \}$ satisfying $Inst(\mathfrak P) =  \Phi(\mathfrak D)$. As the isomorphism $K:(\F_S,\Box, t_{\_,\_,\_})\rightarrow (\F_x,\Diamond,t'_{\_,\_,\_})$ of Proposition \ref{Kfunctor} is semi-monoidal, $K^{-1}(\mathfrak P)$ is canonical for $\{ \Box , t_{\_,\_,\_}, (\ )^{-1} \}$ and by construction $K^{-1}(\mathfrak P) \sim_\dc \mathfrak D$. Thus our result follows.
\end{proof}

The above theorem answers the question posed at the start of this section in the `formal' setting $(\F_S,\Box)$. To map this free setting to the concrete setting, we apply the $Inst:(\F_S,\Box)\rightarrow (\C_S,\otimes)$ functor, giving the following corollary:

\begin{corol}\label{concretesetting}
Given a diagram $\mathfrak E$ over $\C_S$ canonical for $\{ \otimes  ,  \tau_{\_,\_,\_}  ,  \code ,  \star , \alpha  ,  ( \ )^{-1} \}$, then $\mathfrak E$ is guaranteed to commute when there exists a diagram $\mathfrak D$ over $\F_S$ that is canonical for $\{ \Box , t_{\_,\_,\_} , \code_\_ ,\star , \alpha , (\ )^{-1} \}$ satisfying
\begin{enumerate}
\item $\mathfrak D$ is guaranteed to commute by Theorem \ref{formalcase} above. 
\item $Inst(\mathfrak D)=\mathfrak E$.
\end{enumerate}
\end{corol}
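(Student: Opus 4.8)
The plan is to transport the commutativity already established in the free setting $(\F_S,\Box)$ down to the concrete setting $(\C_S,\otimes)$ along the instantiation functor, exploiting the elementary fact that every functor carries commuting diagrams to commuting diagrams. The whole corollary is, in effect, an application of functoriality of $Inst:(\F_S,\Box)\rightarrow(\C_S,\otimes)$ from Definition \ref{ins_def}, once the genuine coherence work of Theorem \ref{formalcase} is in hand.

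First I would invoke hypothesis 1: by Theorem \ref{formalcase}, the diagram $\mathfrak D$ over $\F_S$ is guaranteed to commute. This is where all the substantive content sits, since $\mathfrak D$ lives in the \emph{free} category $\F_S$, where the self-similarity equivalence $\sim_\dc$, the convolution functor $\Phi$, and MacLane's coherence theorem are available. Next I would simply apply $Inst$ to this commuting diagram: as $Inst$ is a functor and $\mathfrak D$ commutes, the image $Inst(\mathfrak D)$ over $\C_S$ commutes as well. Finally, by hypothesis 2 we have $Inst(\mathfrak D)=\mathfrak E$, whence $\mathfrak E$ commutes, as required. (Note that we use only preservation of commutativity here; the fact that $Inst$ is moreover a semi-monoidal equivalence, and so also reflects commutativity, is not needed for this direction.)

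The only point warranting care — and the single place where the stated choice of generators is relevant — is the compatibility of $Inst$ with the two generating sets, namely that an edge of $\mathfrak D$ built from $\{\Box, t_{\_,\_,\_}, \code_\_, \star, \alpha, (\ )^{-1}\}$ is sent to an edge built from $\{\otimes, \tau_{\_,\_,\_}, \code, \star, \alpha, (\ )^{-1}\}$. Since $Inst$ is strict semi-monoidal it carries $\Box$ to $\otimes$ and the formal associativity $t_{u,v,w}$ to $\tau_{Inst(u),Inst(v),Inst(w)}$; being the identity on the hom-set $\F_S(x,x)=\C(S,S)$ it fixes $\star$ and $\alpha$; and it unfolds each generalised code arrow $\code_u$ into an arrow built from $\code$ and $\otimes$, via the inductive characterisation of Remark \ref{inductive_def}. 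This correspondence is, however, exactly what hypothesis 2 ($Inst(\mathfrak D)=\mathfrak E$) encodes, so no real obstacle arises: there is nothing to overcome beyond checking that the generator translation is the expected one, after which the corollary reduces to the remark that functors preserve commuting diagrams.
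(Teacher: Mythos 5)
Your proposal is correct and matches the paper's own (largely implicit) argument exactly: the paper states the corollary as an immediate consequence of applying the functor $Inst:(\F_S,\Box)\rightarrow(\C_S,\otimes)$ to a diagram already guaranteed to commute by Theorem \ref{formalcase}, which is precisely your use of functoriality preserving commutativity. Your additional check that $Inst$ translates the generating set $\{\Box, t_{\_,\_,\_}, \code_\_, \star, \alpha, (\ )^{-1}\}$ into $\{\otimes, \tau_{\_,\_,\_}, \code, \star, \alpha, (\ )^{-1}\}$ (unfolding $\code_\_$ via Remark \ref{inductive_def}) is a sound elaboration of a point the paper leaves tacit, not a departure from its route.
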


The identification of generalised convolution as the instantiation functor of a semi-monoidally isomorphic category (Proposition \ref{Kfunctor}) then translates the above into the following neat heuristic for characterising such diagrams:
\begin{corol}
Let $\mathfrak E$ be a diagram over $\C_S$ canonical for $\{ \otimes   ,  \tau_{\_,\_,\_}  ,  \code ,   \star , \alpha  ,  ( \ )^{-1} \}$. Let us  form a new diagram $\mathfrak E^\flat$ over $C(S,S)$  by the following procedure:
\begin{itemize}
\item Replace every object in $\mathfrak E$ by $S$.
\item Replace every occurrence of $\otimes$ by $\star$.
\item Replace every occurrence of $\tau_{\_,\_,\_}$ by $\alpha$.
\item Replace every occurrence of $\code$ by $1_S$.
\end{itemize}
Then $\mathfrak E^\flat$, which is canonical for $\{ \_ \star \_ , \alpha , (\ )^{-1} \}$,  is guaranteed to commute by MacLane's coherence theorem for associativity iff $\mathfrak E$ is guaranteed to commute by Corollary \ref{concretesetting}. 
\end{corol}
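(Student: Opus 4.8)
The plan is to recognise the flattening procedure $\mathfrak E\mapsto \mathfrak E^\flat$ as nothing other than the generalised convolution functor $\Phi$ applied to a canonical lift of $\mathfrak E$ along the instantiation functor, and then to read off the claimed equivalence directly from Theorem \ref{formalcase} and Corollary \ref{concretesetting}. The first step is to lift $\mathfrak E$: since $\mathfrak E$ is presented as a canonical diagram, its edges are given by formal expressions in $\{\otimes, \tau_{\_,\_,\_}, \code, \star, \alpha, (\ )^{-1}\}$, and reading the same expressions in $(\F_S,\Box)$ --- replacing $\otimes$ by $\Box$, each $\tau_{A,B,C}$ by $t_{u,v,w}$ (with $A=Inst(u)$ etc., as recorded by the presentation), and $\code$ by $\code_{x\Box x}=\code$, while leaving the endomorphism-level pieces $\star,\alpha,(\ )^{-1}$ untouched --- produces a diagram $\mathfrak D$ over $\F_S$ canonical for $\{\Box, t_{\_,\_,\_}, \code_\_, \star, \alpha, (\ )^{-1}\}$. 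Because $Inst$ is a strict semi-monoidal functor that is the identity on hom-sets, with $Inst(t_{u,v,w})=\tau_{Inst(u),Inst(v),Inst(w)}$ and $Inst(\code_{x\Box x})=\code$, it is immediate that $Inst(\mathfrak D)=\mathfrak E$.

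The second step is to verify $\Phi(\mathfrak D)=\mathfrak E^\flat$. Here I would appeal to the behaviour of $\Phi$ on generators: $\Phi$ sends every object to $x\cong S$ (matching ``replace every object by $S$''); $\Phi(f\Box g)=\Phi(f)\star\Phi(g)$ by Corollary \ref{monoidal_phi} (matching $\otimes\mapsto\star$); $\Phi(t_{u,v,w})=\alpha$ by Part 2 of Lemma \ref{technical} and the definition $\alpha=\Phi(t_{x,x,x})$ (matching $\tau\mapsto\alpha$); $\Phi(\code)=\Phi(\code_{x\Box x})=1_x$ (matching $\code\mapsto 1_S$); and $\Phi$ fixes every arrow of $\F_S(x,x)=\C(S,S)$, hence fixes the $\star,\alpha,(\ )^{-1}$ pieces. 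Since $\Phi$ is a strict semi-monoidal functor it distributes over composition and $\Box$, so applying it edge-by-edge to the lifted formulas reproduces exactly the syntactic substitution defining $\mathfrak E^\flat$; thus $\Phi(\mathfrak D)=\mathfrak E^\flat$ as diagrams over $End(x)$, and $\mathfrak E^\flat$ is indeed canonical for $\{\star,\alpha,(\ )^{-1}\}$.

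The third step is to chain the two earlier results. By Theorem \ref{formalcase}, $\mathfrak D$ lies in the class of diagrams guaranteed to commute precisely when $\Phi(\mathfrak D)=\mathfrak E^\flat$ is predicted to commute by MacLane's coherence theorem for associativity. Combined with $Inst(\mathfrak D)=\mathfrak E$, Corollary \ref{concretesetting} then gives, in one direction, that $\mathfrak E^\flat$ commuting by MacLane forces $\mathfrak D$ to be guaranteed to commute, and hence $\mathfrak E$ to be guaranteed to commute; conversely, if $\mathfrak E$ is guaranteed to commute via this canonical witness $\mathfrak D$, then $\mathfrak D$ is guaranteed to commute and so $\mathfrak E^\flat=\Phi(\mathfrak D)$ commutes by MacLane. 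This is the desired biconditional.

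The main obstacle I anticipate is the well-definedness underpinning $\Phi(\mathfrak D)=\mathfrak E^\flat$, together with its interaction with the existential quantifier in Corollary \ref{concretesetting}. The flattening is prima facie defined on a presentation of $\mathfrak E$, while the lift involves choices of trees $u,v,w$ witnessing $A=Inst(u)$, so I must check that $\Phi(\mathfrak D)$ does not depend on these choices. This is exactly where the uniformity of $\Phi$ saves the argument: because $\Phi(t_{u,v,w})=\alpha$ for all $u,v,w$ and $\Phi(\code_u)=1_x$ for all $u$, all tree-dependence collapses, so any two canonical lifts of $\mathfrak E$ --- and in particular any witness supplied by Corollary \ref{concretesetting} --- share the same $\Phi$-image $\mathfrak E^\flat$ up to isomorphism of the underlying graph. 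The relevant cancellations of generalised code/decode arrows are guaranteed by the coherence result Lemma \ref{selfsim-coherence}, and the identification $\Phi=Inst\circ K$ of Proposition \ref{Kfunctor} lets one transport the whole computation into the strictly self-similar category $(\F_x,\Diamond)$, where MacLane's theorem applies verbatim. Once this independence is secured, both directions of the biconditional close without further work.
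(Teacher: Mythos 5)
Your first three steps are correct and are essentially the paper's own route: the paper offers no separate proof of this corollary, presenting it as an immediate translation of Corollary \ref{concretesetting} through Theorem \ref{formalcase} and Proposition \ref{Kfunctor}, which is exactly the chain you use, and your edge-by-edge verification that $\Phi(\mathfrak D)=\mathfrak E^\flat$ (via $\Phi(f\Box g)=\Phi(f)\star\Phi(g)$, $\Phi(t_{u,v,w})=\alpha$, $\Phi(\code_u)=1_x$, and $\Phi$ fixing $\F_S(x,x)=\C(S,S)$) makes explicit the substitution table that the paper leaves implicit. Under the intended reading --- where $\mathfrak E^\flat$ is formed from the \emph{given} canonical presentation of $\mathfrak E$, and ``guaranteed by Corollary \ref{concretesetting}'' refers to the lift $\mathfrak D$ determined by that same presentation --- your steps close both directions of the biconditional with nothing further needed: condition 2 of Theorem \ref{formalcase} applied to $\mathfrak D$ \emph{is} the statement.

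The genuine flaw is the witness-independence claim in your final paragraph: it is false that any two canonical lifts of $\mathfrak E$, or any witness supplied by the existential in Corollary \ref{concretesetting}, share the same $\Phi$-image up to graph isomorphism. Take $(\C,\otimes)$ strictly associative (a case the paper explicitly allows, e.g.\ the infinite matrix monoid of Examples \ref{infinitematrices}, and the setting of Proposition \ref{not_strict}). Then $\tau_{S,S,S}=1_{S\otimes S\otimes S}$, so the single semantic arrow $1_{S\otimes S\otimes S}$ admits the canonical lift $t_{x,x,x}\in\F_S\bigl(x\Box(x\Box x),(x\Box x)\Box x\bigr)$ as well as lifts of the form $(1_x\Box 1_x)\Box 1_x$; their $\Phi$-images are $\alpha$ and $1_x$ respectively, and $\alpha\neq 1_x$ whenever $S$ is not the unit object (Theorem \ref{no_sim_strict}, Proposition \ref{not_strict}). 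Note that Lemma \ref{selfsim-coherence} cannot rescue the claim: the discrepancy between the two lifts is measured by $\alpha$, which is canonical for \emph{associativity}, not for self-similarity, so no cancellation of generalised code/decode arrows collapses it. Consequently, if one insists on reading the right-hand side of the biconditional as a bare existential over all witnesses, the ``if'' direction can fail in the strictly associative case (a diagram whose given presentation flattens to parallel edges $\alpha$ and $1_x$ may still admit a different witness whose $\Phi$-image is MacLane-guaranteed); the correct repair is not your uniformity claim but the presentation-relative reading above, under which your paragraph on well-definedness is simply unnecessary.
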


%\section{Acknowledgements} \label{usual-suspects}
%The author wishes to thank Samson Abramsky (Oxford) for long-term interest and encouragement relating to the topics of this paper, and Chris Heunen %(Oxford) for many concrete suggestions on previous drafts. Thanks are also due to Mark Lawson (Heriot-Watt) for similar ideas from an inverse semigroup %theoretic perspective, discussions on self-similarity generally, and numerous applications, to Prakash Panangaden (McGill) for both interest and %encouragement in the general theory and the neat `code' and `decode' nomenclature, and to Phil Scott (Ottawa) for many interesting discussions on %categorical interpretations of untyped logical systems. Finally, special thanks to both the Editor (Robin Cockett) and the anonymous referees of MSCS for a %great deal of patience, tolerance, and many helpful suggestions on how to improve both presentation and theory.

\end{document}